\setlist[enumerate]{leftmargin=25pt}
\setlist[itemize]{leftmargin=25pt}
\newtheorem{thm}{Theorem}[section]
\newtheorem{lemma}[thm]{Lemma}
\newtheorem{prop}[thm]{Proposition}
\newtheorem{cor}[thm]{Corollary}
\newtheorem{conj}[thm]{Conjecture}
\theoremstyle{definition}
\newtheorem*{rem}{Remark}
\theoremstyle{definition}
\newtheorem{defn}[thm]{Definition}
\DeclareMathOperator{\inter}{int}
\def\R{\mathbb{R}}
\def\Z{\mathbb{Z}}
\def\Q{\mathbb{Q}}
\def\N{\mathbb{N}}
\DeclareMathOperator{\vol}{vol}
\DeclareMathOperator{\lin}{lin}
\DeclareMathOperator{\flt}{Flt}
\newcommand{\lgp}{LGP}
\newcommand{\ol}{\overline}
\newcommand{\supp}{\ensuremath{\operatorname{supp}}}
\DeclareMathOperator{\image}{\ensuremath{\operatorname{im}}}
\DeclareMathOperator{\bbm}{bbm}
\newcommand{\norm}[1]{{\left\|{#1}\right\|}}
\newcommand{\ent}[1]{{\left[{#1}\right]}}
\newcommand{\bra}[1]{{\left({#1}\right)}}
\newcommand{\abs}[1]{{\left|{#1}\right|}}
\newcommand{\set}[1]{{\left\{{#1}\right\}}}
\newcommand{\al}{\ensuremath{\alpha}}
\newcommand{\be}{\ensuremath{\beta}}
\newcommand{\de}{\ensuremath{\delta}}
\newcommand{\eps}{\ensuremath{\varepsilon}}
\newcommand{\La}{\ensuremath{\Lambda}}
\newcommand{\la}{\ensuremath{\lambda}}
\newcommand{\bears}{\begin{eqnarray*}}
\newcommand{\eears}{\end{eqnarray*}}
\newcommand{\mc}[1]{\ensuremath{\mathcal{#1}}}
\newcommand{\ZZ}{\ensuremath{\mathbb{Z}}}
\newcommand{\QQ}{\ensuremath{\mathbb{Q}}}
\newcommand{\RR}{\ensuremath{\mathbb{R}}}
\newcommand{\TT}{\ensuremath{\mathbb{T}}}
\newcommand{\sm}{\ensuremath{\setminus}}
\newcommand{\ssq}{\ensuremath{\subseteq}}
\newcommand{\vn}{\ensuremath{\varnothing}}
\newcommand{\half}[1]{\ensuremath{\frac{1}{2}\bs 1_{#1}}}
\newcommand{\bs}[1]{\ensuremath{\mathbf{#1}}}
\providecommand{\abs}[1]{\lvert#1\rvert}
\newcommand{\bsm}{\ensuremath{\boldsymbol}}
\numberwithin{equation}{section}
\begin{document}

\title[Covering radii, view-obstructions, and lonely runners]{On the covering radius of lattice zonotopes and its relation to view-obstructions and the lonely runner conjecture}

\author{Matthias Henze}
\thanks{The first author is supported by the Freie Universit\"at Berlin within the Excellence Initiative of the German Research Foundation.}
\address{Institut f\"ur Mathematik, Freie Universit\"at Berlin, Arnimallee 2, 14195 Berlin, Germany}
\email{matthias.henze@fu-berlin.de}

\author{Romanos-Diogenes Malikiosis} 
\thanks{The second author is supported by a Postdoctoral Fellowship from the Alexander von Humboldt Foundation.}
\address{Technische Universit\"at Berlin, Institut f\"ur Mathematik,
Sekretariat MA 4-1,
Stra{\ss}e des 17. Juni 136,
D-10623 Berlin, Germany}
\email{malikios@math.tu-berlin.de}


\subjclass[2010]{Primary 52C17; Secondary 11H31, 52C07}

\keywords{Covering radius, zonotope, view-obstruction, Lonely Runner Conjecture, billiard ball motion, Flatness Theorem}

\begin{abstract}
The goal of this paper is twofold; first, show the equivalence between certain problems in geometry, such as view-obstruction and billiard ball motions, with
the estimation of covering radii of lattice zonotopes. Second, we will estimate upper bounds of said radii by virtue of the Flatness Theorem.
These problems are similar in nature with the famous lonely runner conjecture.
\end{abstract}

\maketitle

\section{Introduction}

The purpose of this article is to exhibit and utilize the equivalence of certain geometric problems in different settings: {\bf (a)} billiard ball motions inside a cube avoiding an inner cube,
{\bf (b)} lines in a multidimensional torus avoiding a smaller ``copy'' of the torus, {\bf (c)} views unobstructed by a lattice arrangement of cubes, and {\bf (d)} 
covering radii of lattice zonotopes.

The equivalence of the first three problems has been shown in the works of Wills~\cite{willslrc}, Cusick~\cite{cusickviewob}, 
and Schoenberg~\cite{Sch76}, among others; the equivalence to estimating covering radii of zonotopes is the novelty here.
The latter interpretation gives the possibility to use techniques from discrete geometry and the geometry of numbers in order to tackle these problems.

In a nutshell the four settings are related as follows: A billiard ball motion inside the cube $[0,1]^m$ can be unfolded into a line in the torus $\TT^m=[0,1)^m$, by reflecting appropriately its pieces between the boundary of the cube.
Then, through periodization of this configuration we obtain a lattice arrangement of lines in the space $\R^m$. So, if the billiard ball motion intersects an inner cube,
say $[\eps,1-\eps]^m$, then the corresponding line in $\TT^m$ will intersect a smaller ``copy'' of the torus, and the line in $\R^m$ will intersect a lattice arrangement of cubes.
An equivalent condition to the latter case is having a cube intersecting a lattice arrangement of lines in~$\R^m$.
Then, under an appropriate projection, we get a zonotope intersecting a lattice.
In order to get bounds on the size of the cubes under question, we will chiefly work in the zonotope setting.
The other three settings and their equivalences have been investigated in some detail before.
The keen reader might recognize that when the line in question passes through the origin we essentially deal with the \emph{lonely runner problem}.

In order to make all this precise, some notation is in order.
For standard notions in convex geometry and the theory of lattices, we refer the reader to the textbooks of Gruber~\cite{gruber2007convex} and Martinet~\cite{martinet2003perfect}, respectively.
A billiard ball motion inside the unit cube is denoted by $\bbm(\bsm u_0,\bsm\al)$, where $\bsm\al$ shall denote its initial direction and $\bsm u_0$ its starting point.
There is a linear subspace~$V_{\bsm\al}$ of~$\R^m$ that uniquely corresponds to every such $\bsm\al\in\R^m$ (see Subsection \ref{subsect_zonotopes_vo}). 
The orthogonal projection $C_m|V_{\bsm\al}$ of the unit cube $C_m=[0,1]^m$ onto $V_{\bsm\al}$ is a zonotope with vertices in $\ZZ^m|V_{\bsm\al}$.
Next, take an invertible linear map $T:V_{\bsm\al}\to\R^n$, for which $T(\ZZ^m|V_{\bsm\al})=\ZZ^n$, and denote the zonotope $T(C_m|V_{\bsm\al})$ by $Z_{\bsm\al}$.
We further let $\bs1_m=(1,\ldots,1)^\intercal$ be the all-one-vector in $\R^m$.

The discussed equivalences can now be summarized as follows.

\begin{thm}\label{equivform}
 Let $\bsm{u}_0\in\R^m$, $\bsm{\alpha}\in\R^m$, $0\leq\eps\leq1/2$, and $n=\dim(V_{\bsm{\alpha}})$.
The following statements are equivalent.
\begin{enumerate}[label={\bf (V\arabic*)},leftmargin=35pt]
 \item The $\bbm(\bsm{u}_0,\bsm{\al})$ in $[0,1]^m$ intersects $[\eps,1-\eps]^m$.
 \item The line $\left\{\bsm{u}_0+t\bsm{\al} \mid t\in\RR\right\}/\,\Z^m$ in $\TT^m$ intersects $[\eps,1-\eps]^m$.
 \item The view from $\bsm{u}_0$ with direction $\bsm{\al}$ is obstructed by $[\eps,1-\eps]^m+\ZZ^m$. 
 \item $\left((1-2\eps)Z_{\bsm{\alpha}}-\bar{\bsm{u}}_0\right)\cap\Z^n\neq\vn$, where $\bar{\bsm{u}}_0=T(\bsm{u}_0-\eps\bs1_m|V_{\bsm\al})$.
\end{enumerate}
\end{thm}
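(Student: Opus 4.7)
The plan is to establish the cyclic chain $\text{(V1)} \Leftrightarrow \text{(V2)} \Leftrightarrow \text{(V3)} \Leftrightarrow \text{(V4)}$. The first three are classical folklore (attributed to Wills, Cusick, and Schoenberg in the introduction), so I would sketch them briefly and devote the bulk of the argument to the novel equivalence $\text{(V3)} \Leftrightarrow \text{(V4)}$, which is the heart of the paper.

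For $\text{(V1)} \Leftrightarrow \text{(V2)}$ I would invoke the classical billiard unfolding: reflecting $C_m$ across each face it hits turns the trajectory into a straight line in $\RR^m$. The reflection group $\{\pm 1\}^m \ltimes \ZZ^m$ has fundamental domain $C_m$, and the further quotient produces the torus $\TT^m$. Since $[\eps,1-\eps]^m$ is invariant under each map $x_i \mapsto 1-x_i$, it descends unambiguously to a subset of $\TT^m$, and the billiard hits it iff the unfolded line in $\TT^m$ does. For $\text{(V2)} \Leftrightarrow \text{(V3)}$, a point of $\TT^m$ lies in $[\eps,1-\eps]^m$ iff one (and hence every) of its lifts to $\RR^m$ lies in $[\eps,1-\eps]^m + \ZZ^m$; applying this pointwise to the line $\{\bsm u_0 + t\bsm\al\}/\ZZ^m$ yields the equivalence.

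For the novel equivalence $\text{(V3)} \Leftrightarrow \text{(V4)}$, I would proceed by the following sequence of rewritings. Using $[\eps,1-\eps]^m = \eps\bs 1_m + (1-2\eps)C_m$, condition (V3) becomes the existence of $t \in \RR$ and $\bsm z \in \ZZ^m$ with
\[
\bsm u_0 - \eps\bs 1_m + t\bsm\al - \bsm z \in (1-2\eps)C_m.
\]
For fixed $\bsm z$, letting $t$ vary traces a line parallel to $\bsm\al$. By construction $V_{\bsm\al} \perp \bsm\al$, so under orthogonal projection onto $V_{\bsm\al}$ this line collapses to the single point $(\bsm u_0 - \eps\bs 1_m - \bsm z)|V_{\bsm\al}$; the standard projection principle for convex bodies thus gives that the line meets $(1-2\eps)C_m$ iff that point lies in $(1-2\eps)(C_m|V_{\bsm\al})$. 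Hence (V3) is equivalent to the existence of $\bsm z \in \ZZ^m$ with
\[
(\bsm u_0 - \eps\bs 1_m)|V_{\bsm\al} - \bsm z|V_{\bsm\al} \in (1-2\eps)(C_m|V_{\bsm\al}).
\]
Applying the linear isomorphism $T$, using $T(C_m|V_{\bsm\al}) = Z_{\bsm\al}$ and the lattice-preserving identity $T(\ZZ^m|V_{\bsm\al}) = \ZZ^n$, and finally exploiting $\ZZ^n = -\ZZ^n$, this rewrites as $((1-2\eps)Z_{\bsm\al} - \bar{\bsm u}_0) \cap \ZZ^n \neq \vn$, i.e.\ (V4).

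The main obstacle is essentially bookkeeping and justification of the projection step: I expect the argument to go through cleanly precisely because $\bsm\al$ lies in the orthogonal complement of $V_{\bsm\al}$ (which is built into the definition recalled from Subsection~\ref{subsect_zonotopes_vo}), and because $T$ is by hypothesis a lattice isomorphism between $\ZZ^m|V_{\bsm\al}$ and $\ZZ^n$. It is worth emphasizing that no Kronecker-type density argument is required: although the line $\{\bsm u_0 + t\bsm\al\}/\ZZ^m$ itself may be dense in its rational closure, (V3) only asks whether \emph{some} integer translate of the line in $\RR^m$ meets the inner cube, and each such translate corresponds exactly to one choice of $\bsm z$, hence to one lattice point on the $V_{\bsm\al}$-side after applying $T$.
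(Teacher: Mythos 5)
Your handling of $\text{(V1)}\Leftrightarrow\text{(V2)}\Leftrightarrow\text{(V3)}$ matches the paper's, but your argument for $\text{(V3)}\Leftrightarrow\text{(V4)}$ contains a genuine gap, and the closing claim that ``no Kronecker-type density argument is required'' is precisely where it goes wrong. The fibers of the orthogonal projection onto $V_{\bsm\al}$ are cosets of $V_{\bsm\al}^\perp$, a subspace of dimension $m-n=\dim_\Q(\bsm\al)$, while the line in the direction $\bsm\al$ is only one-dimensional. When $\dim_\Q(\bsm\al)>1$ the line is a proper subset of the fiber, so the ``standard projection principle'' you invoke is false: knowing that $(\bsm u_0 - \eps\bs 1_m - \bsm z)|V_{\bsm\al}$ lies in $(1-2\eps)(C_m|V_{\bsm\al})$ only tells you that the full affine flat $\bsm u_0 - \eps\bs 1_m - \bsm z + V_{\bsm\al}^\perp$ meets $(1-2\eps)C_m$, not that the one-dimensional line in direction $\bsm\al$ through that base point does. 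A concrete failure: take $m=3$, $\bsm\al=(1,\sqrt 2,0)^\intercal$, so $V_{\bsm\al}=\R\bsm e_3$ and $V_{\bsm\al}^\perp=\R\bsm e_1+\R\bsm e_2$. With $\eps=1/4$, the line $(100,0,0.1)^\intercal+\R\bsm\al$ projects to $(0,0,0.1)^\intercal\in[0,1/2]\bsm e_3=(1-2\eps)(C_3|V_{\bsm\al})$, yet it never enters $[0,1/2]^3$ (the first-coordinate and second-coordinate constraints on the parameter $t$ are disjoint intervals).

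The paper closes exactly this gap using Kronecker's theorem: Lemma~\ref{mainlemma} shows that the closure of $\ZZ^m+\R\bsm\al$ is $\mc E_{\bsm\al}$, and Proposition~\ref{lattarr} identifies $\mc E_{\bsm\al}=\La_{\bsm\al}^\star\oplus V_{\bsm\al}^\perp$, which is a union of \emph{full} fibers of the projection. Replacing the one-dimensional line orbit $\ZZ^m+\R\bsm\al$ by its closure $\mc E_{\bsm\al}$ is what makes the projection step valid, and passing back from the closure to the line orbit uses the density in each fiber together with the fact that the (shrunken) cube meets a fiber in a relatively open set. Your argument is correct without Kronecker only in the special case $\dim_\Q(\bsm\al)=1$, i.e.\ $n=m-1$, where the fiber coincides with the line; in that case the $\bsm z$ you vary do account for all the freedom, and the density statement is vacuous. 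For the general statement as claimed, the Kronecker step is indispensable, and you should insert Lemma~\ref{mainlemma} and Proposition~\ref{lattarr} between your reduction to (V3)${}^\prime$-type statements and the projection step.
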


It should be noted that property {\bf(V4)} does not depend on the choice of the map $T$, as long as it satisfies $T(\ZZ^m|V_{\bsm\al})=\ZZ^n$.
Furthermore, when {\bf(V4)} fails, we have a zonotope avoiding a lattice, and through Khinchin's
\emph{Flatness Theorem} \cite{khinchin1948a}, we obtain an estimate on the largest possible $\eps$ that one can choose in Theorem~\ref{equivform} under natural constraints on~$\bsm\al$.
Since our methods are efficient in the last setting, we supply the text with the relevant tools and definitions. 
In Section~\ref{sect_rationally_uniform} it will be apparent why we further restrict the direction vectors $\bsm\al$, requiring that they may be \emph{rationally uniform}:

\begin{defn}
Let $\bsm\al\in\RR^m$ and write $\dim_\Q(\bsm\al)$ for the dimension of the $\Q$-vector space generated by the coordinates of $\bsm\al$.
Then, $\bsm\al$ is called {\em rationally uniform} if every $\dim_\Q(\bsm\al)$ coordinates of $\bsm\al$ are linearly independent over~$\Q$.
If $\bsm\al$ is rationally uniform, then $\bbm(\bsm{u}_0,\bsm\al)$ is also called such.
\end{defn}

For $\bsm u,\bsm v\in\R^n$, let $[\bsm{u},\bsm{v}]$ denote the line segment with endpoints $\bsm{u}$ and~$\bsm{v}$.
A \emph{zonotope} $Z\ssq\R^n$ is the Minkowski sum of line segments $[\bsm{a}_i,\bsm{b}_i]$, $1\leq i\leq m$, where the vectors $\bsm{b}_i-\bsm{a}_i$ generate the whole space $\R^n$.
Here, we shall only consider lattice zonotopes, that is, zonotopes whose vertices lie on a lattice~$\La$.
So, we consider $Z=\sum_{i=1}^{m}[\bsm{0},\bsm{z}_i]$, where $\bsm{z}_1,\dotsc,\bsm{z}_m\in\La$
generate $\R^n$.
Also, we may require that every $n$ vectors from the generators of the zonotope form a basis of $\R^n$; we prove that these are precisely the zonotopes $Z_{\bsm\al}$ that are obtained from rationally uniform $\bsm\al$.

\begin{defn}
Let $S\subset\R^n$ be a finite subset. We say that
 $S$ is in \emph{linear general position} (\lgp), if any $n$ points in $S$ are linearly independent.
\end{defn}

Estimating the \emph{covering radius} of a zonotope will provide us with bounds for the largest possible $\eps$ described above. This quantity is defined generally for convex bodies as follows.

\begin{defn}
Let $K\ssq\R^n$ be a convex body, and let $\La$ be a lattice. Then, the \emph{covering radius} of $K$ with respect to $\La$, denoted by $\mu(K,\La)$, is the smallest positive real number $\rho$
for which the translates of $\rho K$ by $\La$ cover the entire space $\R^n$, that is,
\[\mu(K,\La)=\inf\{\rho>0\mid\rho K+\La=\R^n\}.\]
For $\Lambda=\Z^n$, we abbreviate $\mu(Z)=\mu(Z,\Z^n)$.
\end{defn}

\begin{figure}[t]
\hfill\includegraphics[height=4cm]{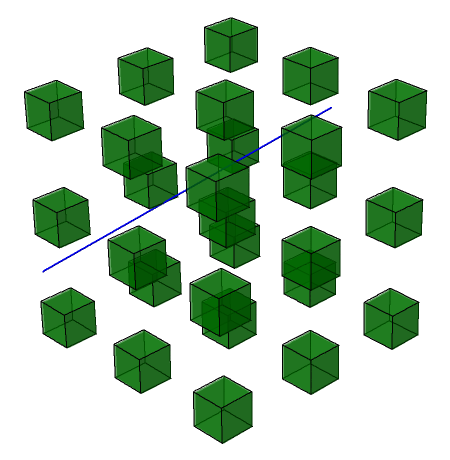}
\hfill\includegraphics[height=4cm]{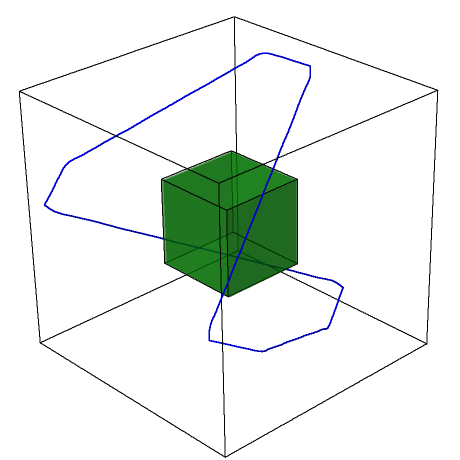}
\hfill\includegraphics[height=3.5cm]{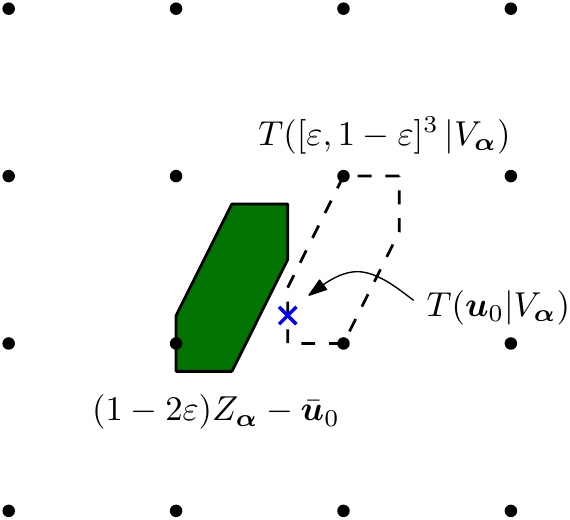}
\hfill
\caption{Illustrating {\bf (V3)}, {\bf (V1)}, and {\bf (V4)} for the parameters ${\boldsymbol u}_0=(1/6,1/2,5/6)^\intercal$, ${\boldsymbol\al}=(1,2,1)^\intercal$, and $\eps=1/3$.}
\end{figure}

We can now state our main result with respect to the covering radius of lattice zonotopes.

\begin{thm}
  \label{thmCoveringRadiusAsymptotics}
Let $S=\{\bsm{z}_1,\ldots,\bsm{z}_m\}\ssq\Z^n$ and let $Z=\sum_{i=1}^m[\bsm{0},\bsm{z}_i]$ be the lattice zonotope generated by~$S$. If $S$ is in \lgp, we have
\[\mu(Z) \leq \frac{c\,n\log{n}}{m-n+1},\]
for some absolute constant $c>0$.

Moreover, there is an $S\subset\Z^n$, $|S|=m$, in \lgp\ such that $\mu(Z) \geq \frac1{m-n+1}$.
\end{thm}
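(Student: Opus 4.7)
The plan is to control $\mu(Z)$ via the lattice width $w(Z) := \min_{v \in \Z^n \setminus \{0\}} \sum_{i=1}^{m} |\langle v, z_i\rangle|$, by means of the sandwich $1/w(Z) \leq \mu(Z) \leq \omega(n)/w(Z)$, where $\omega(n) = O(n \log n)$ is the flatness constant for centrally symmetric convex bodies. The \lgp\ hypothesis then pins down $w(Z) = m - n + 1$ (with matching upper and lower bounds via an explicit construction).

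\textbf{Upper bound.} First I would use \lgp\ to prove $w(Z) \geq m - n + 1$: for any primitive $v \in \Z^n$, at most $n - 1$ of the $z_i$ can lie in the hyperplane $v^\perp$ (else we would have $n$ linearly dependent elements of $S$), so at least $m - n + 1$ of the integers $\langle v, z_i \rangle$ are nonzero, have absolute value $\geq 1$, and together sum to at least $m - n + 1$, giving $w(Z, v) \geq m - n + 1$. Second, I would invoke the Flatness Theorem. For $\rho < \mu(Z)$, we have $\rho Z + \Z^n \neq \R^n$, so some point $x$ satisfies $(x - \rho Z) \cap \Z^n = \emptyset$; equivalently, a translate of $\rho Z$ contains no lattice point. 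Since $Z$ is a zonotope---hence centrally symmetric up to translation---Banaszczyk's form of the Flatness Theorem for centrally symmetric bodies yields $\rho\, w(Z) = w(\rho Z) \leq C n \log n$. Letting $\rho \nearrow \mu(Z)$ and combining with $w(Z) \geq m - n + 1$ gives the upper bound.

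\textbf{Lower bound.} The strategy is to exhibit $S$ in \lgp\ with $w(Z) \leq m - n + 1$ and then use the elementary inequality $\mu(Z) \geq 1/w(Z)$. For the construction, take
\[
 z_i = e_{i+1}\ (1 \leq i \leq n-1), \qquad z_{n-1+j} = e_1 + \sum_{k=1}^{n-1}(j-1)^k e_{k+1}\ (1 \leq j \leq m-n+1),
\]
i.e., a basis of $e_1^\perp$ together with $m - n + 1$ points of the moment curve lying in the affine hyperplane $\{x_1 = 1\}$; a Vandermonde-type determinant argument verifies that $S$ is in \lgp. The width of $Z$ in direction $e_1$ is then $\sum_i |(z_i)_1| = m - n + 1$. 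Finally, the linear form $\ell(x) = \langle e_1, x\rangle$ sends $Z$ to an interval of length $m - n + 1$ and $\Z^n$ onto $\Z$, so $\rho Z + \Z^n$ cannot cover $\R^n$ whenever $\rho(m-n+1) < 1$; hence $\mu(Z) \geq 1/(m-n+1)$.

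\textbf{Main obstacle.} The crux is the Flatness Theorem step. Achieving the claimed $n \log n$ dependence (rather than the weaker $n^{4/3}(\log n)^c$ available for general convex bodies via Rudelson's theorem) requires the centrally symmetric version due to Banaszczyk, which I would need to adapt carefully to translates of lattice zonotopes. The \lgp-to-width reduction, the one-dimensional projection giving $\mu(Z) \geq 1/w(Z)$, and the Vandermonde verification of \lgp\ are then routine.
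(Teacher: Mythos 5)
Your proposal is correct and follows essentially the same route as the paper: bound the lattice width from below by $m-n+1$ using the \lgp\ hypothesis, invoke Banaszczyk's $O(n\log n)$ Flatness Theorem for $o$-symmetric bodies (noting zonotopes are $o$-symmetric up to translation), and construct a Vandermonde-type generating set to realize $w(Z)=m-n+1$ and hence $\mu(Z)\geq 1/(m-n+1)$. The only superficial difference is in the explicit extremal set (you use $n-1$ coordinate vectors plus $m-n+1$ moment-curve points, while the paper uses $n$ coordinate vectors plus $m-n$ moment-curve points), but both give the same width in the direction $\bsm e_1$ and both are verified to be in \lgp\ by the Vandermonde determinant.
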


The logarithmic factor in the above estimate is most likely not needed and it has its roots in the application of the Flatness Theorem to our problem.
It is commonly believed that the \emph{flatness constant} is of order $O(n)$ rather than the currently known bound $O(n\log{n})$ (see the discussion in Section~\ref{sect_flatness}).
A beautiful result of Schoenberg~\cite{Sch76} regarding billiard ball motions inside the unit cube is the following.

\begin{thm}[Schoenberg~\cite{Sch76}]\label{Sch}
Every nontrivial billiard ball motion inside the unit cube $[0,1]^m$ intersects the cube $[\eps,1-\eps]^m$ if and only if $\eps\leq 1/(2m)$.
The number of such motions touching the cube for $\eps=1/(2m)$ is essentially finite.
\end{thm}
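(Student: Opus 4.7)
The plan is to use Theorem~\ref{equivform} to read Schoenberg's theorem as the sharp assertion $\sup_{\bsm\al\text{ nontrivial}}\mu(Z_{\bsm\al})=(m-1)/m$, and then to establish this bound (together with its tightness and equality analysis) through a one-dimensional density argument in formulation (V1). The reformulation is immediate from the equivalence (V1)$\Leftrightarrow$(V4): for fixed nontrivial $\bsm\al$ every $\bbm(\bsm u_0,\bsm\al)$ meets $[\eps,1-\eps]^m$ iff $(1-2\eps)Z_{\bsm\al}+\Z^n=\R^n$, i.e.\ $\mu(Z_{\bsm\al})\le 1-2\eps$, so the theorem is exactly $\mu(Z_{\bsm\al})\le (m-1)/m$ with equality at essentially finitely many $\bsm\al$.

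For the upper bound, I would fix nontrivial $\bsm\al$ (all $\al_i\ne 0$) and $\bsm u_0$ and argue in (V1). A point of the BBM lies in $[\eps,1-\eps]^m$ at time $t$ precisely when $\|u_{0,i}+t\al_i\|\ge\eps$ for every $i$, where $\|\cdot\|$ denotes the distance to the nearest integer. Setting
\[
T_i:=\bigl\{t\in\R:\|u_{0,i}+t\al_i\|<\eps\bigr\},
\]
each $T_i$ is an open periodic union of intervals of density exactly $2\eps$ in $\R$, and the BBM misses $[\eps,1-\eps]^m$ iff $\bigcup_{i=1}^m T_i=\R$. Subadditivity of density gives
\[
d\!\Bigl(\bigcup_{i=1}^m T_i\Bigr)\le\sum_{i=1}^{m} d(T_i)=2m\eps,
\]
so for $\eps<1/(2m)$ the complement of the union has positive density, is in particular nonempty, and any such $t$ witnesses the BBM inside the inner cube. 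At the borderline $\eps=1/(2m)$ the bound may be saturated, but since every $T_i$ is open, whenever equality holds the open union still fails to cover the common boundary points of adjacent intervals, so the BBM meets the closed cube $[\eps,1-\eps]^m$ at those boundary times.

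For the converse direction and for the finiteness claim, I would exhibit the extremal BBM with $\bsm\al=\bs1_m$ and $\bsm u_0=(0,1/m,\ldots,(m-1)/m)$: each $T_i$ is then an arc of length $2\eps$ on $\TT$ centred at $-(i-1)/m$, the $m$ arcs tile $\TT$ exactly at $\eps=1/(2m)$ and overlap for any $\eps>1/(2m)$, which yields an avoiding BBM and proves the ``only if'' direction. The essentially finitely many touching motions are classified by tracking when equality can hold throughout the density bound. Weyl's equidistribution on the two-torus forces $|T_i\cap T_j|>0$ whenever $\al_i/\al_j\notin\Q$, so equality forces $\bsm\al\in\R\bsm r$ for some primitive $\bsm r\in\Z^m$; a period-comparison of the interval families then shows that pairwise disjointness at $\eps=1/(2m)$ requires $|r_1|=\cdots=|r_m|=1$ and the phases $u_{0,i}\bmod 1$ to coincide with a shift of $\{0,1/m,\ldots,(m-1)/m\}$ up to a coordinate permutation. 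Modding out sign flips of $\bsm\al$, coordinate permutations, and integer translation of $\bsm u_0$, only finitely many BBMs remain.

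The main obstacle is the period-comparison step in the last paragraph: the density bound is only a first moment, and pinning down its equality case requires ruling out exotic rational near-tilings of $\R$ by periodic interval families of unequal periods. Everything else -- the reduction through Theorem~\ref{equivform}, the density computation for the $T_i$, and the extremal diagonal BBM -- is essentially routine.
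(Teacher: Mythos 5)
The paper does not prove Theorem~\ref{Sch}; it cites it directly from Schoenberg~\cite{Sch76}. What follows the statement in the text is only the extremal example $\bsm u_0=(0,1/m,\ldots,(m-1)/m)^\intercal$, $\bsm\al=\bs 1_m$, and the later reformulation Theorem~\ref{zonocodim1}. There is therefore no in-paper proof to compare against; I can only assess your argument on its own terms.

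Your density bound for $\eps<1/(2m)$ is correct and clean: each $T_i$ is an open $(1/|\al_i|)$-periodic set of density exactly $2\eps$, so subadditivity gives $d(\bigcup_i T_i)\le 2m\eps<1$ and the complement is nonempty. (The reduction through Theorem~\ref{equivform} announced in your first paragraph is never used afterwards, since you argue directly in (V1), so that paragraph is inert.) The borderline case $\eps=1/(2m)$, however, is not actually argued: the sentence about ``common boundary points of adjacent intervals'' presupposes that $\bigcup T_i$ is a tiling by intervals, which is precisely what you would need to establish. The correct chain is: equality in subadditivity forces pairwise density-zero overlap of the $T_i$; for periodic open sets this forces genuine disjointness (Weyl equidistribution handles incommensurable $\al_i/\al_j$, common periodicity handles commensurable ones); and $m\ge 2$ disjoint nonempty open sets cannot cover the connected space $\R$.

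The finiteness part contains a false claim. You assert that disjointness at $\eps=1/(2m)$ forces $|r_1|=\cdots=|r_m|=1$. Counterexample: for $m=3$, $\eps=1/6$, take $\bsm\al=(1,1,2)$ and $\bsm u_0=(5/6,1/3,1/6)$. Then, modulo~$1$, $T_1=(0,1/3)$, $T_2=(1/2,5/6)$, and $T_3=(1/3,1/2)\cup(5/6,1)$. These are pairwise disjoint, their union misses only $\{0,1/3,1/2,5/6\}$, and at those times the BBM lies on the boundary of $[1/6,5/6]^3$ (at $t=0$ the position is $(5/6,1/3,1/6)$, whose first and third coordinates equal $1-\eps$ and $\eps$). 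So $(1,1,2)$ is a genuine touching direction inequivalent to $\bs 1_3$ under coordinate permutations, sign flips, and time translation. The ``exotic rational near-tilings'' you flag as something to be ruled out in fact occur, and the classification of touching motions with unequal $r_i$ is exactly where the substance of the ``essentially finite'' claim lies. As it stands, this part of your proof is wrong, not merely unfinished.
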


A nontrivial billiard ball motion is one that is not contained in a translate of a coordinate hyperplane.
As we shall see below, these motions correspond exactly to initial directions $\bsm\al$ every coordinate of which is nonzero.
An example of a billiard ball motion that intersects the cube $[\frac1{2m},1-\frac1{2m}]^m$ only in the boundary is given by the parameters
\[\bsm u_0=\bra{0,\frac{1}{m},\ldots,\frac{m-1}{m}}^\intercal\quad\text{and}\quad\bsm\al=\bs 1_m=(1,\ldots,1)^\intercal.\]
We later reformulate Schoenberg's result to the statement that the inequality $\mu(Z)\leq n/(n+1)$ holds for every lattice zonotope $Z\subseteq\R^n$ that is generated by $n+1$ vectors in \lgp.
This interpretation motivates the investigation of billiard ball motions whose initial directions have restricted rational dependencies.
Based on the equivalence of {\bf(V1)}-{\bf(V4)}, we define, for any $n\leq m$,
\begin{align*}
\eps(n,m)=\sup\big\{\eps\geq0 \mid &\,{\bf (Vi)}\text{ hold for any }\bsm u_0\in\R^m\text{ and any rationally}\\
&\text{ uniform }\bsm\al\in(\R\sm\set{0})^m\text{ with}\,\dim_{\Q}(\bsm\al)\geq m-n\big\}.
\end{align*}

The trivial cases are $\eps(0,m)=1/2$ and $\eps(m,m)=0$, whereas Theorem~\ref{Sch} translates into $\eps(m-1,m)=1/(2m)$.
We show in Section~\ref{subsect_rationally_uniform} that $\eps(n,m)=\frac12\left(1-\sup \mu(Z)\right)$, where the supremum is taken over all lattice zonotopes $Z\subseteq\R^k$ generated by~$m$ vectors of $\Z^k$ in \lgp, and where $k\leq n$.
Therefore, Theorem~\ref{thmCoveringRadiusAsymptotics} translates into the bounds
\begin{align}
\frac{m-O(n\log{n})}{2(m-n+1)}\leq\eps(n,m)\leq\frac{m-n}{2(m-n+1)}.\label{eqnEpsAsympBounds}
\end{align}
The fact that the logarithmic factor in the lower estimate of $\eps(n,m)$ might not be needed, as well as the implied constant in the case covered by Theorem~\ref{Sch} leads us to formulate:

\begin{conj}\label{mainconj}
 Let $S=\set{\bsm{z}_1,\dotsc,\bsm{z}_m}\ssq\Z^n$ and let $Z$ be the lattice zonotope generated by $S$. If $S$ is in \lgp, then $\mu(Z)\leq \frac{n}{m}$. 
\end{conj}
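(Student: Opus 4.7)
Since the case $m = n+1$ of the conjecture is precisely Schoenberg's Theorem~\ref{Sch} in the reformulation $\mu(Z) \leq n/(n+1) = n/m$ given in the excerpt, the natural plan is to build on this base case. I would first pass to a centrally symmetric reformulation: setting $c = \tfrac12\sum_{i=1}^m z_i$, the translate $\tilde Z = Z - c = \tfrac12(Z - Z)$ satisfies $\mu(\tilde Z) = \mu(Z)$, and $Z - Z = \sum_{i=1}^m[-z_i, z_i]$ has support function $h(y) = \sum_{i=1}^m|\langle y, z_i\rangle|$ and polar body $(Z - Z)^\circ = \{y : h(y) \leq 1\}$.

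The LGP hypothesis enters as a clean lower bound on the first successive minimum of the dual. For any nonzero $y\in\Z^n$, at most $n-1$ of the inner products $\langle y, z_i\rangle$ can vanish---otherwise $n$ of the $z_i$ would be perpendicular to $y \neq 0$, contradicting their linear independence---so at least $m-n+1$ of the integers $\langle y, z_i\rangle$ are nonzero. Hence
\[
\lambda_1\bigl((Z-Z)^\circ, \Z^n\bigr) \;=\; \min_{y\in\Z^n\setminus\{0\}} \sum_{i=1}^m |\langle y, z_i\rangle| \;\geq\; m - n + 1.
\]
Feeding this into the Flatness Theorem, as in the proof of Theorem~\ref{thmCoveringRadiusAsymptotics}, would give $\mu(Z) \leq Cn/(m-n+1)$, which matches the conjecture up to constants in the regime $m \gg n$ but is much weaker than $n/m$ when $m$ is close to $n$.

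The main obstacle is therefore the regime where $m - n$ is small, where the transference route is lossy and one must use the zonotope structure directly rather than treating $Z - Z$ as a generic symmetric body. A natural avenue is the Bohne--Dress tiling: a zonotope with generators in LGP decomposes into $\binom{m}{n}$ parallelepipeds $P_I = \sum_{i\in I}[0, z_i]$ indexed by $n$-subsets $I\subseteq[m]$, each with $\mu(P_I, \Z^n) \leq 1$. One could try to stitch covering witnesses from different bases together, weighted by $|\det(z_I)|$ as in Shephard's volume formula $\vol(Z) = \sum_I|\det(z_I)|$, in order to produce a single covering of $\R^n$ by $(n/m) Z + \Z^n$. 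Alternatively, one could revisit Schoenberg's Kronecker-equidistribution proof of Theorem~\ref{Sch} and attempt to extend it, via Theorem~\ref{equivform}, from the boundary regime $\dim_\Q(\bsm\alpha) = m-1$ to the full rationally uniform regime $\dim_\Q(\bsm\alpha) \geq m-n$. The hard step in either approach is combining constructions tied to different $n$-subsets of $[m]$ into one covering statement inside $(n/m)Z$; I expect this to require a genuinely new combinatorial or number-theoretic ingredient beyond what the Flatness Theorem alone provides.
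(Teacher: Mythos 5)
The statement you were asked to prove is labeled a \emph{conjecture} in the paper and is not proved there; what the paper establishes is only the weaker Theorem~\ref{thmCoveringRadiusAsymptotics}, $\mu(Z)\leq c\,n\log n/(m-n+1)$, obtained from the Flatness Theorem. Your proposal correctly recognizes this, and your partial argument is in fact the same one the paper gives: for a zonotope $Z=\sum_i[\bsm 0,\bsm z_i]$ one has $w(Z,\bsm y)=\sum_i\abs{\scal{\bsm y,\bsm z_i}}$, so your quantity $\lambda_1\bigl((Z-Z)^\circ,\Z^n\bigr)$ is exactly the lattice width $w(Z)$, and your observation that LGP forbids $n$ of the inner products $\scal{\bsm y,\bsm z_i}$ from vanishing simultaneously for $\bsm y\neq\bsm 0$ is precisely Lemma~\ref{lemLatticeWidth}, yielding $w(Z)\geq m-n+1$.

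One inaccuracy worth flagging: feeding $w(Z)\geq m-n+1$ into the transference inequality $\mu(Z)w(Z)\leq\flt(n)$ gives $\mu(Z)\leq c\,n\log n/(m-n+1)$, not $Cn/(m-n+1)$, since the best known symmetric flatness constant (Banaszczyk) carries a $\log n$; dropping the $\log$ already presupposes the conjectured optimal flatness bound $\flt(n)\in O(n)$. So even in the regime $m\gg n$ the transference route is off from the conjectured $n/m$ by a $\log n$ factor, not merely a constant, and this is in addition to the $m-n+1$ versus $m$ discrepancy that you correctly identify as the obstacle when $m$ is close to $n$. Your speculative ideas --- stitching the parallelepiped dissection of $Z$ into a covering witness, or pushing Schoenberg's Kronecker argument beyond $\dim_\Q(\bsm\alpha)=m-1$ --- are sensible directions, but as you yourself acknowledge neither is carried out here or in the paper; Conjecture~\ref{mainconj} remains open, with the only proved cases being $n\in\set{0,1,m-1,m}$.
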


\begin{rem}
This is equivalent to $\eps(n,m)\geq\frac{m-n}{2m}$. By virtue of Theorem~\ref{equivform}, for $\bsm\al\in\R^m$ with $\dim_{\Q}(\bsm\al)\geq m-n$,
Conjecture \ref{mainconj} has the following equivalent forms:
\begin{itemize}
 \item Every rationally uniform $\bbm(\bsm{u}_0,\bsm\al)$ intersects $\ent{\frac{m-n}{2m},\frac{m+n}{2m}}^m$.
 \item For every $\bsm{u}_0\in\R^m$ and every rational uniform $\bsm\al\in\R^m$, the line $\set{\bsm{u}_0+t\bsm\al \mid t\in \R}/\,\Z^m$ in $\TT^m$ intersects $\ent{\frac{m-n}{2m},\frac{m+n}{2m}}^m$.
 \item From every point $\bsm u_0\in\R^m$, the view with rationally uniform direction~$\bsm\al$ is obstructed by $\ent{\frac{m-n}{2m},\frac{m+n}{2m}}^m+\Z^m$.
\end{itemize}
\end{rem}

The paper is organized as follows.
In the next section, we familiarize the reader with Schoenberg's terminology concerning billiard ball motions, albeit in a more general setting, and we rephrase Theorem~\ref{Sch}
in the context of lines in a multidimensional torus.
Finally we pass to lattice zonotopes through orthogonal projections, thus establishing the equivalence of the properties {\bf(V1)}--{\bf(V4)} in Theorem~\ref{equivform}.

In Section~\ref{sect_rationally_uniform}, we restrict our attention to rationally uniform vectors, justifying the definition of $\eps(n,m)$ above.
We show that these vectors are associated with lattice zonotopes generated by vectors in LGP and vice versa.
We briefly discuss the \emph{associated zonotope} of $Z_{\bsm\al}$, originally studied by
Shephard~\cite{shephard1974combinatorial}, and establish the monotonicity properties of~$\eps(n,m)$.

As the $\eps(n,m)$ can be expressed in terms of covering radii of lattice zonotopes generated by vectors in \lgp, we apply Khinchin's Flatness Theorem in order to produce nontrivial bounds;
Theorem~\ref{thmCoveringRadiusAsymptotics} is thus proved in Section~\ref{sect_flatness}. 

In the last section, we draw some connections to the Lonely Runner Problem, where we state with Corollary~\ref{corEquivLRP} the analogous statement to Theorem~\ref{equivform} in this setting.
We also rephrase this problem in terms of zonotopes
centered at a point of order $2$ modulo $\ZZ^n$ having a lattice point, with the hope that it could be useful towards establishing better bounds than the existing ones
(Conjecture \ref{conjZonotopalLRC}).
Aside from that, we prove that it suffices to consider integer velocities in the Lonely Runner Conjecture.
This reduction was originally stated by Wills~\cite{willslrc}, thereafter taken for granted, until a proof appeared in~\cite{sixrunners}, which however depends on solving the Lonely Runner Conjecture in lower dimensions.
In Lemma~\ref{LRCintegerReduction} we prove the reduction to integer velocities unconditionally. 
Finally, we show that the Lonely Runner Conjecture implies a more refined statement, namely Conjecture~\ref{strongLRC}, where we take the dimension of the $\Q$-span of the velocities into account.

\section{Billiard ball motions, multidimensional tori, and zonotopes}
\label{sect_theorem_equivs}

\subsection{Billiard ball motions}
\label{subsectBBM}

In~\cite{Sch76}, Schoenberg defines a billiard ball motion inside a cube as rectilinear and uniform and it is reflected in the usual way
when striking any of the cube's faces. Since the boundary of a cube is not smooth, a little care should be taken when this motion hits the boundary of the cube in lower-dimensional faces.

Let $\bsm{u}_0\in[0,1]^m$ and let $\bsm{\al}=(\al_1,\dotsc,\al_m)\in\R^m$.
Initially, the billiard ball motion starting at $\bsm{u}_0$ with inital direction $\bsm{\al}$ has the form $\bsm{u}_0+t\bsm{\al}$, $t\geq0$.
Assume that $t_0$ is the first instance when this motion hits the boundary of the standard cube $[0,1]^m$, say, at the relative interior of the facet $x_j=\eps_j$ for $j\in J\ssq\set{1,2,\dotsc,m}$, where $\eps_j=0$ or $1$.
Then, the motion is reflected and follows the path $\bsm{u}_0+t_0\bsm{\al}+(t-t_0)\bsm{\al}'$ for $t>t_0$ (until it hits the boundary again), where $\al'_j=-\al_j$ for $j\in J$ and  $\al'_j=\al_j$ otherwise.
It is clear that such a motion is completely determined by the starting point $\bsm{u}_0$ and 
the initial direction $\bsm{\al}$.

\begin{defn}
A \emph{billiard ball motion} starting at $\bsm{u}_0$ and with initial direction $\bsm{\al}$, that is reflected naturally as described above, is denoted by $\bbm(\bsm{u}_0,\bsm{\al})$. We call such a motion \emph{nontrivial} when all the coordinates
of $\bsm{\al}$ are nonzero, and \emph{rationally uniform}, when $\bsm\al$ is. 
\end{defn}

If $\bsm{v}_0$ is the symmetric point of $\bsm{u}_0+t_0\bsm{\al}$ with respect to $\half m$,
then it is clear from the above that the path $\bsm{v}_0-(t-t_0)\bsm{\al}'$ is symmetric to the
path $\bsm{u}_0+t_0\bsm{\al}+(t-t_0)\bsm{\al}'$ with respect to $\half m$.
Hence, for every $\eps\geq0$, the $\bbm(\bsm{u}_0,\bsm{\al})$ avoids the cube $[\eps,1-\eps]^m$, if and only if the line $\bsm{u}_0+t\bsm{\al}$, $t\in\R$, in the torus $\TT^m=\R^m/\Z^m$ avoids the same cube.
In the latter case, the coordinates are taken $\bmod\,1$.
Furthermore, the latter case happens if and only if the line $\set{\bsm{u}_0+t\bsm{\al} \mid t\in\R}$ avoids the set $[\eps,1-\eps]^m+\ZZ^m$.
This however can be stated as a view-obstruction property, namely that the view from~$\bsm{u}_0$ with
direction $\bsm{\al}$ is not obstructed by $[\eps,1-\eps]^m+\ZZ^m$.
In summary, these considerations prove the equivalence of the statements {\bf (V1)}, {\bf (V2)}, and {\bf (V3)} in Theorem~\ref{equivform}.

By Theorem~\ref{Sch}, when we restrict $\bsm\al$ to have no coordinate equal to zero, the infimum of such~$\eps$ is equal to $1/(2m)$.
An equivalent statement is that the $l^{\infty}$-distance from $\half m$
to any line in $\TT^m$ not parallel to a coordinate hyperplane is at most $\frac{m-1}{2m}$.
Note that $\TT^m$ inherits the $l^{\infty}$-distance from $\R^m$ as a quotient space.

%

\subsection{Lines in a multidimensional torus}\label{multitorus}

Now that the connection between billiard ball motions and lines in a torus has been established,
we want to determine the shape of a line in a multidimensional torus, or equivalently, the shape of its periodization into
the whole space $\R^m$ by the standard lattice~$\ZZ^m$.
So, for $\bsm{\al}\in\R^m$, we want to describe $\ZZ^m+\R\bsm{\al}$.
Ideas similar to those discussed below have been briefly elaborated on in~\cite[Lem.~8]{sixrunners}, and to a greater extent by the second author in~\cite{CyclotomicPyjama} with respect to a different problem.

We define the lattice
\[\La_{\bsm{\al}}:=\left\{\bsm{l}\in\ZZ^m\mid\bsm{l}\cdot\bsm{\al}=0\right\}\]
and the set
\[\mathcal{E}_{\bsm{\al}}:=\left\{\bsm{\xi}\in\R^m\mid\bsm{l}\cdot\bsm{\xi}\in\ZZ,\forall\bsm{l}\in\La_{\bsm{\al}}\right\}.\]
For a first description of $\Z^m+\R\bsm\al$, we need a special case of Kronecker's approximation theorem.

\begin{thm}[Kronecker \cite{kron1885approx}]\label{thmKronecker}
Let $\bsm\al,\bsm\be\in\RR^k$. For every $\eps>0$ there is an integer $q\in\Z$ and a vector $\bsm p\in\ZZ^k$ such that
\[\norm{q\bsm\al-\bsm p-\bsm\be}_{\infty}<\eps,\]
if and only if for every $\bsm r\in\ZZ^k$ with $\bsm r\cdot\bsm\al\in\ZZ$ we also have $\bsm r\cdot\bsm\be\in\ZZ$.
\end{thm}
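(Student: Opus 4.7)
For the ``only if'' direction, I would argue directly: suppose $\bsm r \in \Z^k$ satisfies $\bsm r \cdot \bsm\al \in \Z$. Given any $\eps > 0$, apply the hypothesis with target approximation $\eps/(\|\bsm r\|_1+1)$ to obtain $(q,\bsm p)$ with $\|q\bsm\al - \bsm p - \bsm\be\|_\infty$ small, and take the inner product with $\bsm r$ to obtain
\[\left|q(\bsm r \cdot \bsm\al) - \bsm r \cdot \bsm p - \bsm r \cdot \bsm\be\right| < \eps.\]
The first two terms are integers, so $\bsm r \cdot \bsm\be$ lies within $\eps$ of $\Z$; since $\eps > 0$ was arbitrary, $\bsm r \cdot \bsm\be \in \Z$.

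For the ``if'' direction, observe that the existence of $(q,\bsm p)$ with $\|q\bsm\al - \bsm p - \bsm\be\|_\infty < \eps$ for every $\eps > 0$ is exactly the statement that $\bsm\be$ lies in the closure $\overline{L}$ of the additive subgroup $L := \Z\bsm\al + \Z^k \subseteq \R^k$. The plan is to establish the dual description
\[\overline{L} = \{\bsm x \in \R^k : \bsm r \cdot \bsm x \in \Z \text{ for every } \bsm r \in \Z^k \text{ with } \bsm r \cdot \bsm\al \in \Z\},\]
after which the hypothesis on $\bsm\be$ places it in the right-hand side, hence in $\overline{L}$. The inclusion ``$\subseteq$'' is immediate: for any fixed $\bsm r$ with $\bsm r \cdot \bsm\al \in \Z$, the linear functional $\bsm x \mapsto \bsm r \cdot \bsm x$ sends $L$ into $\Z$, and the same persists on $\overline{L}$ by continuity.

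For the reverse inclusion, I would pass to the torus $\TT^k = \R^k/\Z^k$ via the quotient map $\pi$. Since $\Z^k \subseteq L$, one has $\overline{L} = \pi^{-1}(G)$ where $G := \overline{\pi(L)}$ is the closure of the cyclic subgroup generated by $\pi(\bsm\al)$, a closed subgroup of the compact group $\TT^k$. Pontryagin duality for $\TT^k$, whose character group is $\Z^k$ via $\bsm r \mapsto \chi_{\bsm r}$ with $\chi_{\bsm r}(\bsm x) = e^{2\pi \i \bsm r \cdot \bsm x}$, then gives $G = (G^\perp)^\perp$; explicitly,
\[G = \{\bsm y \in \TT^k : \bsm r \cdot \bsm y \in \Z \text{ for all } \bsm r \in G^\perp\}, \qquad G^\perp = \{\bsm r \in \Z^k : \bsm r \cdot \bsm\al \in \Z\},\]
where the identification of $G^\perp$ uses continuity and the fact that $G$ is the closure of integer multiples of $\bsm\al$ modulo $\Z^k$. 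Pulling back through $\pi$ then yields the claimed formula for $\overline{L}$.

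The main obstacle is the duality step $G = (G^\perp)^\perp$ for closed subgroups $G$ of $\TT^k$. A clean proof proceeds via the density of trigonometric polynomials in $C(\TT^k)$: any point $\bsm y \notin G$ is separated from the closed set $G$ by a continuous function, which is then approximated in sup-norm by a trigonometric polynomial, thereby producing a character $\chi_{\bsm r}$ that distinguishes $\bsm y$ from $G$ and witnesses some $\bsm r \in G^\perp$ with $\bsm r \cdot \bsm y \notin \Z$. A completely elementary but more intricate alternative is an induction on $k$ via Dirichlet's simultaneous approximation theorem, splitting cases according to $\dim_\Q(\bsm\al)$ and thereby avoiding any appeal to harmonic analysis at the cost of a careful case analysis of the rational dependencies among the coordinates of $\bsm\al$.
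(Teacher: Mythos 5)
The paper does not prove this theorem; it cites Kronecker directly and uses it as a black box in the proof of Lemma~\ref{mainlemma}. So there is no internal proof to compare against, and your proposal is really a supplementary argument.

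Your structure is sound and the easy (``only if'') direction is complete and correct. In the hard direction, the reduction of the hypothesis to $\pi(\bsm\be)\in G:=\overline{\pi(L)}$ with $L=\Z\bsm\al+\Z^k$, the identification $G^\perp=\{\bsm r\in\Z^k:\bsm r\cdot\bsm\al\in\Z\}$, and the pullback $\overline{L}=\pi^{-1}(G)$ (which needs $\pi$ open, not just continuous) are all fine. The gap is in the phrase ``thereby producing a character.'' Approximating a Urysohn function by a trigonometric polynomial $P=\sum_{\bsm r}c_{\bsm r}\chi_{\bsm r}$ that separates $\pi(\bsm\be)$ from $G$ does not by itself isolate a single $\bsm r\in G^\perp$ with $\chi_{\bsm r}(\pi(\bsm\be))\neq1$: the value $P(\pi(\bsm\be))$ mixes contributions from all frequencies $\bsm r$, including those outside $G^\perp$. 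The step that actually does the work is to use that $G$ and the coset $G+\pi(\bsm\be)$ are disjoint, choose $f$ equal to $0$ on $G$ and $1$ on $G+\pi(\bsm\be)$, and integrate $P$ against Haar measure on each coset. Orthogonality of characters on the compact group $G$ gives $\int_G\chi_{\bsm r}\,d\mu_G=0$ for $\bsm r\notin G^\perp$ and $=1$ for $\bsm r\in G^\perp$, so $\int_G P\,d\mu_G=\sum_{\bsm r\in G^\perp}c_{\bsm r}$ while $\int_{G+\pi(\bsm\be)}P=\sum_{\bsm r\in G^\perp}c_{\bsm r}\chi_{\bsm r}(\pi(\bsm\be))$. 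If every $\bsm r\in G^\perp$ had $\chi_{\bsm r}(\pi(\bsm\be))=1$ these two integrals would coincide, contradicting $\|P-f\|_\infty<1/3$. Without this averaging argument your Stone--Weierstrass plan stops short of the duality statement $G=(G^\perp)^\perp$; with it, the proof is complete. The alternative you mention (induction on $k$ with Dirichlet approximation, casing on $\dim_\Q(\bsm\al)$) is also a legitimate elementary route and avoids harmonic analysis entirely.
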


\begin{lemma}\label{mainlemma}
Let $\bsm{\al}\in\R^m$. Then, $\mathcal{E}_{\bsm{\al}}$ is the closure of $\ZZ^m+\R\bsm{\al}$. 
\end{lemma}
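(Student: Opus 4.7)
First, I would dispatch the easy inclusion $\overline{\Z^m + \R\bsm\al} \subseteq \mathcal{E}_{\bsm\al}$ directly. For any $\bsm n \in \Z^m$, $t \in \R$, and $\bsm l \in \La_{\bsm\al}$, the identity $\bsm l \cdot (\bsm n + t\bsm\al) = \bsm l \cdot \bsm n \in \Z$ (using $\bsm l \cdot \bsm\al = 0$) shows $\bsm n + t\bsm\al \in \mathcal{E}_{\bsm\al}$. The set $\mathcal{E}_{\bsm\al}$ is closed, being the intersection of the closed preimages $\{\bsm\xi : \bsm l \cdot \bsm\xi \in \Z\}$ over $\bsm l \in \La_{\bsm\al}$.

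For the reverse inclusion I would fix $\bsm\xi \in \mathcal{E}_{\bsm\al}$ and $\eps > 0$ and seek $\bsm n \in \Z^m$, $t \in \R$ with $\|\bsm n + t\bsm\al - \bsm\xi\|_\infty < \eps$. The natural tool is Kronecker's Theorem~\ref{thmKronecker}, but two mismatches must be bridged: it produces an \emph{integer} $q$ in place of a real $t$, and its hypothesis concerns $\bsm r \in \Z^m$ with $\bsm r \cdot \bsm\al \in \Z$, which is weaker than $\bsm r \cdot \bsm\al = 0$. My plan is to apply Kronecker not to $\bsm\xi$ itself but to a shifted vector $\bsm\be := \bsm\xi + s\bsm\al$ for a cleverly chosen $s \in \R$; the conclusion then gives $q \in \Z$ and $\bsm p \in \Z^m$ with $\|q\bsm\al - \bsm p - \bsm\be\|_\infty < \eps$, so that $t := q - s$ and $\bsm n := -\bsm p$ solve the original problem.

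The choice of $s$ will come from analyzing the intermediate group $\La_{\bsm\al}^{\Z} := \{\bsm r \in \Z^m : \bsm r \cdot \bsm\al \in \Z\} \supseteq \La_{\bsm\al}$. Because $\bsm r \mapsto \bsm r \cdot \bsm\al$ has kernel $\La_{\bsm\al}$, it induces an embedding $\La_{\bsm\al}^{\Z}/\La_{\bsm\al} \hookrightarrow \Z$, so this quotient is either trivial or cyclic, generated by some $\bsm r_0$ with $\bsm r_0 \cdot \bsm\al = k_0 \in \Z_{>0}$. In the trivial case, Kronecker's hypothesis for $\bsm\xi$ (taking $s = 0$) is exactly the defining property of $\mathcal{E}_{\bsm\al}$. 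In the cyclic case, decomposing $\bsm r = \bsm l + j\bsm r_0$ with $\bsm l \in \La_{\bsm\al}$, $j \in \Z$, reduces the hypothesis on $\bsm\be$ to the single congruence $sk_0 \equiv -\bsm r_0 \cdot \bsm\xi \pmod{1}$, which is readily solved (e.g.\ $s = -\bsm r_0 \cdot \bsm\xi / k_0$).

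I expect the main obstacle to be precisely this mismatch between $\bsm r \cdot \bsm\al = 0$ and $\bsm r \cdot \bsm\al \in \Z$. The shift by $s\bsm\al$ is the device that absorbs the integer residues $\bsm r \cdot \bsm\al$, and the cyclicity of $\La_{\bsm\al}^{\Z}/\La_{\bsm\al}$ is what makes a single real scalar $s$ sufficient to verify Kronecker's condition for all $\bsm r \in \La_{\bsm\al}^{\Z}$ simultaneously; everything else is bookkeeping on the triangle inequality.
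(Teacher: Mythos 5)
Your proposal is correct and resolves the genuine obstacle in the right way, but your route differs in a meaningful technical respect from the paper's. Both proofs must bridge the gap between Kronecker's hypothesis (concerning $\bsm r\cdot\bsm\al\in\Z$) and the defining condition on $\La_{\bsm\al}$ (concerning $\bsm l\cdot\bsm\al=0$), and both do so by shifting the target along $\bsm\al$ by a fixed real amount. The paper achieves this implicitly: after discarding the trivial case $\bsm\al=\bsm0$, it rescales so that $\al_m=1$, looks at the one-parameter family $(x_m+s)\bsm\al-\bsm x$, $s\in\Z$, whose last coordinate is automatically integral, and then invokes Kronecker in dimension $m-1$; the verification of Kronecker's hypothesis is a direct computation using $\bsm x\in\mathcal{E}_{\bsm\al}$ and the normalization. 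You instead stay in dimension $m$ and make the algebraic structure explicit: the group $\La_{\bsm\al}^{\Z}=\{\bsm r\in\Z^m:\bsm r\cdot\bsm\al\in\Z\}$ has $\La_{\bsm\al}$ as kernel of $\bsm r\mapsto\bsm r\cdot\bsm\al$, so $\La_{\bsm\al}^{\Z}/\La_{\bsm\al}$ embeds in $\Z$ and is generated by a single class $\bsm r_0+\La_{\bsm\al}$; choosing $s=-\bsm r_0\cdot\bsm\xi/k_0$ makes $\bsm\be=\bsm\xi+s\bsm\al$ satisfy Kronecker's hypothesis, and the conclusion transports back to $\bsm\xi$ exactly. (In fact, after the paper's normalization $\al_m=1$, one has $\bsm r_0=\bsm e_m$, $k_0=1$, and your $s$ becomes $-x_m$, matching the paper's shift; your argument explains \emph{why} a one-parameter shift suffices, while the paper's normalization makes this automatic.) What your version buys is a dimension-free, coordinate-free formulation that exposes the cyclic structure responsible for the reduction; what the paper's version buys is a shorter verification with no explicit group theory. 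One small nit: no triangle inequality is actually needed in the last step, since the shift is exact rather than approximate — with $t=q-s$ and $\bsm n=-\bsm p$, the bound $\|q\bsm\al-\bsm p-\bsm\be\|_\infty<\eps$ is literally $\|\bsm n+t\bsm\al-\bsm\xi\|_\infty<\eps$.
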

\begin{proof}
Inclusion is obvious: let $\bsm{\xi}\in\ZZ^m+\R\bsm{\al}$ be arbitrary. So, $\bsm{\xi}=\bsm{\nu}+x\bsm{\al}$ for some $\bsm{\nu}\in\ZZ^m$ and $x\in\R$.
Now, for arbitrary $\bsm{l}\in\La_{\bsm{\al}}$, we have
\[\bsm{l}\cdot\bsm{\xi}=\bsm{l}\cdot\bsm{\nu}+x(\bsm{l}\cdot\bsm{\al})=\bsm{l}\cdot\bsm{\nu}\in\ZZ,\]
since, by definition, $\La_{\bsm{\al}}\ssq\ZZ^m$.
Hence, $\bsm{\xi}\in\mathcal{E}_{\bsm{\al}}$.

Now let $\bsm{x}=(x_1,\dotsc,x_m)\in\mathcal{E}_{\bsm{\al}}$ be arbitrary.
If $\bsm{\al}=\bsm{0}$, then obviously $\mathcal{E}_{\bsm{\al}}=\ZZ^m$ and there is nothing more to prove.
Thus, we assume that $\bsm{\al}\neq\bsm{0}$, and without loss of generality we may assume that $\al_m\neq0$.
Moreover, we can assume that $\al_m=1$, since both $\mathcal{E}_{\bsm{\al}}$ and $\La_{\bsm{\al}}$ are invariant under multiplication of $\bsm{\al}$ by a nonzero number.

We show that we can approximate $\bsm{x}$ by elements of $\ZZ^m+\R\bsm{\al}$ as close as we want.
It suffices to prove that the sequence 
\[(x_m+s)\bsm{\al}-\bsm x=((x_m+s)\al_1-x_1,\dotsc,(x_m+s)\al_{m-1}-x_{m-1},s)^\intercal,\, s\in\ZZ\]
has terms arbitrarily close to $\ZZ^m$, or equivalently, the sequence
\[(s\al_1,\dotsc,s\al_{m-1})^\intercal,\, s\in\ZZ\]
has terms arbitrarily close to $(x_1-x_m\al_1,\dotsc,x_{m-1}-x_m\al_{m-1})^\intercal+\ZZ^{m-1}$. Consider the projection $\pi:\R^m\rightarrow\R^{m-1}$
that ``forgets'' the last coordinate.
In view of Kronecker's Theorem~\ref{thmKronecker}, the closure of the subgroup of $\TT^{m-1}$ generated by 
\[\pi(\bsm{\al})+\ZZ^{m-1}=(\al_1,\dotsc,\al_{m-1})^\intercal+\ZZ^{m-1}\]
is the set of all $(a_1,\dotsc,a_{m-1})^\intercal+\ZZ^{m-1}$ for which $l_1a_1+\dotsb+l_{m-1}a_{m-1}\in\ZZ$ whenever $l_1,\dotsc,l_{m-1}\in\ZZ$ and $l_1\al_1+
\dotsb+l_{m-1}\al_{m-1}\in\ZZ$.
In other words, it suffices to prove that
\[\pi(\bsm{l})\cdot(x_1-x_m\al_1,\dotsc,x_{m-1}-x_m\al_{m-1})^\intercal\in\ZZ,\]
whenever $\bsm{l}\in\La_{\bsm{\al}}$.
So, when $\bsm{l}\in\La_{\bsm{\al}}$, by definition of $\bsm x\in\mathcal{E}_{\bsm{\al}}$ we have $\bsm{l}\cdot\bsm{x}\in\ZZ$.
Hence, $\bsm{l}\cdot(\bsm{x}-x_m\bsm{\al})\in\ZZ$. The latter is obviously equal to the desired inner product, completing the proof.
\end{proof}

\begin{rem}\rm
When the coordinates of $\bsm{\al}$ are linearly independent over $\QQ$, then $\La_{\bsm{\al}}=\{\bsm{0}\}$
and $\mathcal{E}_{\bsm{\al}}=\R^m$.
Therefore, by Lemma~\ref{mainlemma}, $\ZZ^m+\R\bsm{\al}$ is dense in~$\R^m$, or equivalently, the set
\[\left\{\left(\{x\al_1\},\{x\al_2\},\dotsc,\{x\al_m\}\right)\mid x\in\R\right\}\]
is dense in $\TT^m$, where $\{y\}=y-\lfloor y \rfloor$ denotes the fractional part of $y\in\R$.

In particular, this means that $\eps(0,m)=1/2$ as claimed in the introduction.
\end{rem}

\subsection{Zonotopes and view-obstructions}\label{subsect_zonotopes_vo}

We now finish the proof of Theorem~\ref{equivform} by providing the details of the zonotopal description of the view-obstruction problem under consideration.
Our arguments are based on a more illuminating description of $\mc E_{\bsm\al}$.
Define $V_{\bsm\al}$ to be the linear span of $\La_{\bsm\al}$, and let $\La^\star_{\bsm\al}=\{\bsm{x}\in V_{\bsm\al} \mid \bsm{l}\cdot\bsm{x}\in\Z, \forall\bsm{l}\in\Lambda_{\bsm\al}\}$ be the
dual lattice of $\La_{\bsm\al}$ inside~$V_{\bsm\al}$, where $V_{\bsm\al}$ is now considered as an inner product subspace of $\R^m$, with respect to the standard inner product.

\begin{prop}\label{lattarr}
For every $\bsm\al\in\R^m$ holds $\mc E_{\bsm\al}=\La^\star_{\bsm\al}\oplus V^{\perp}_{\bsm\al}$.
\end{prop}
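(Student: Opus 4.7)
The plan is to exploit the orthogonal decomposition $\R^m = V_{\bsm\al} \oplus V^\perp_{\bsm\al}$ and observe that the defining condition of $\mc E_{\bsm\al}$, namely $\bsm l \cdot \bsm\xi \in \Z$ for all $\bsm l \in \La_{\bsm\al}$, only ``sees'' the component of $\bsm\xi$ lying in $V_{\bsm\al}$, since $\La_{\bsm\al} \subseteq V_{\bsm\al}$ by construction.

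More concretely, I would start by writing any $\bsm\xi \in \R^m$ uniquely as $\bsm\xi = \bsm y + \bsm z$, where $\bsm y$ is the orthogonal projection of $\bsm\xi$ onto $V_{\bsm\al}$ and $\bsm z \in V^\perp_{\bsm\al}$. For every $\bsm l \in \La_{\bsm\al}$ we have $\bsm l \in V_{\bsm\al}$, hence $\bsm l \cdot \bsm z = 0$ and consequently
\[\bsm l \cdot \bsm\xi \;=\; \bsm l \cdot \bsm y.\]
Thus $\bsm\xi \in \mc E_{\bsm\al}$ if and only if $\bsm l \cdot \bsm y \in \Z$ for all $\bsm l \in \La_{\bsm\al}$, which, together with $\bsm y \in V_{\bsm\al}$, is exactly the condition $\bsm y \in \La^\star_{\bsm\al}$. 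This establishes the inclusion $\mc E_{\bsm\al} \subseteq \La^\star_{\bsm\al} + V^\perp_{\bsm\al}$.

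For the reverse inclusion, any element of the form $\bsm y + \bsm z$ with $\bsm y \in \La^\star_{\bsm\al}$ and $\bsm z \in V^\perp_{\bsm\al}$ satisfies $\bsm l \cdot (\bsm y + \bsm z) = \bsm l \cdot \bsm y \in \Z$ for all $\bsm l \in \La_{\bsm\al}$, so it lies in $\mc E_{\bsm\al}$. Finally, the sum is direct because $\La^\star_{\bsm\al} \subseteq V_{\bsm\al}$ and $V_{\bsm\al} \cap V^\perp_{\bsm\al} = \{\bsm 0\}$, giving the claimed equality $\mc E_{\bsm\al} = \La^\star_{\bsm\al} \oplus V^\perp_{\bsm\al}$.

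There is essentially no obstacle here; the statement is a direct unpacking of the definitions once one notices that $\La_{\bsm\al}$ spans $V_{\bsm\al}$, so that the dual lattice inside $V_{\bsm\al}$ captures the ``integrality'' condition and $V^\perp_{\bsm\al}$ absorbs any orthogonal freedom. The only minor point to verify carefully is that $\La^\star_{\bsm\al}$, being defined using all of $\La_{\bsm\al}$ rather than a basis, is indeed a full-rank lattice in $V_{\bsm\al}$, which follows immediately from $V_{\bsm\al} = \lin(\La_{\bsm\al})$.
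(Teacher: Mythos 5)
Your proof is correct and follows essentially the same approach as the paper: decompose $\R^m = V_{\bsm\al}\oplus V^\perp_{\bsm\al}$, observe that for $\bsm l\in\La_{\bsm\al}\subseteq V_{\bsm\al}$ the inner product $\bsm l\cdot\bsm\xi$ depends only on the $V_{\bsm\al}$-component, and then read off both inclusions from the definition of the dual lattice.
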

\begin{proof}
Let $\bsm z\in \mc E_{\bsm\al}$ be arbitrary, and let $\bsm z=\bsm x+\bsm y$, where $\bsm x\in V_{\bsm\al}$ and $\bsm y\in V^{\perp}_{\bsm\al}$.
It suffices to prove that $\bsm l\cdot\bsm x\in\ZZ$, for all $\bsm l\in\La_{\bsm\al}\subseteq V_{\bsm\al}$.
For any such~$\bsm l$, we have $\bsm l\cdot\bsm y=0$, hence $\bsm l\cdot\bsm x=\bsm l\cdot\bsm z$, and
by definition $\bsm l\cdot\bsm z\in\ZZ$, thus proving $\mc E_{\bsm\al}\ssq\La^\star_{\bsm \al}\oplus V^{\perp}_{\bsm\al}$.
 
For the reverse inclusion, let $\bsm x+\bsm y=\bsm z\in\La^\star_{\bsm\al}\oplus V^{\perp}_{\bsm\al}$ be arbitrary with $\bsm x\in \La^\star_{\bsm \al}$ and $\bsm y\in V^{\perp}_{\bsm\al}$.
Also, let $\bsm l\in\La_{\bsm\al}\subseteq V_{\bsm\al}$ be arbitrary.
By definition, $\bsm l\cdot \bsm y=0$ and $\bsm l\cdot \bsm x\in\ZZ$, therefore $\bsm l\cdot \bsm z\in\ZZ$, thus proving that $z\in \mc E_{\bsm \al}$.
\end{proof}

Let $\bsm\al\in\R^m$.
By the previous result, $\mc E_{\bsm\al}$ consists of infinitely many parallel affine subspaces, in a lattice arrangement.
By Lemma~\ref{mainlemma}, the condition~{\bf (V3)} is equivalent to the following:
\begin{flalign*}
&\text{{\bf(V3)\textprime}\	\	\hspace{3cm}}\mc E_{\bsm\al}\text{ intersects }[\eps,1-\eps]^m-\bsm u_0. &
\end{flalign*}
Next, denote the unit cube $[0,1]^m$ by $C_m$ and assume that {\bf (V3)\textprime} holds for all $\bsm u_0\in\R^m$, which is certainly true when $\eps\leq1/(2m)$ according to Theorem~\ref{Sch} and the remarks in Section~\ref{subsectBBM}.
In other words, every translate of $(1-2\eps)C_m$ intersects $\mc E_{\bsm\al}$.
Having this in mind, we project everything orthogonally onto~$V_{\bsm\al}$.
First, we note that $\bsm e_i\in \mc E_{\bsm\al}$ for all $1\leq i\leq m$, as $\ZZ^m\ssq \mc E_{\bsm\al}$ by definition, where the $\bsm e_i$ are the standard basis vectors.
Then, $\mc E_{\bsm\al}|V_{\bsm\al}=\La^\star_{\bsm\al}$ by Proposition~\ref{lattarr}, and $C_m|V_{\bsm\al}$ is a zonotope generated by $\bsm e_i|V_{\bsm\al}=\bsm w_i\in \La^\star_{\bsm\al}$,
$1\leq i\leq m$.

In order to describe $C_m|V_{\bsm\al}$, let $\dim(V_{\bsm\al})=n$ and let $\set{\bsm{a}_1,\dotsc,\bsm{a}_n}$ be a basis of $\La_{\bsm\al}$ with $\bsm{a}_i=(a_{i1},\dotsc,a_{im})$, 
$1\leq i\leq n$.
Furthermore, let $\set{\bsm{a}_1^\star,\dotsc,\bsm{a}_n^\star}$ be the dual basis of $\La^\star_{\bsm\al}$, so that
$\bsm{a}_i\cdot\bsm{a}_j^\star=\de_{ij}$, where $\delta_{ij}$ is the Kronecker delta.
Since $\bsm{a}_i\cdot\bsm{w}_j=\bsm{a}_i\cdot\bsm e_j=a_{ij}$, we have
\[\bsm{w}_j=\sum_{i=1}^n a_{ij}\bsm{a}_i^\star.\]
Now, $C_m|V_{\bsm\al}=\sum_{j=1}^{m}[\bs 0,\bsm{w}_j]$, so if we apply the linear transformation $T:V_{\bsm\al}\to\R^n$ that sends the basis $\set{\bsm{a}_1^\star,\dotsc,\bsm{a}_n^\star}$ to the standard basis of $\R^n$, we find that the zonotope $Z=T(C_m|V_{\bsm\al})$ is generated by the columns of the matrix $\bsm A=(a_{ij})$, whose rows are given by $\bsm{a}_1^\intercal,\dotsc,\bsm{a}_n^\intercal$.
We also denote this zonotope by $Z_{\bsm\al}$ to stress its dependence on $\bsm\al$.
Note that $Z_{\bsm\al}$ also depends on the choice of the basis of $\Lambda_{\bsm\al}$, however, different such choices lead to \emph{unimodularly equivalent} zonotopes, that is, they are the same up to a linear transformation $\bsm U\in\Z^{n\times n}$ with $\det(\bsm U)=\pm 1$.
As the value of $\eps$ in Theorem~\ref{equivform} is invariant under unimodular transformations of the problem, we can safely ignore this dependence in the sequel.

By Proposition \ref{lattarr}, property {\bf(V3)\textprime} is equivalent to the fact that $\mc E_{\bsm\al}|V_{\bsm\al}=\La_{\bsm\al}^\star$ intersects $(1-2\eps)C_m|V_{\bsm\al}-(\bsm{u}_0-\eps\bs1_m)|V_{\bsm\al}$, which in turn is equivalent to the fact that $\ZZ^n$ has nontrivial intersection with $(1-2\eps)Z_{\bsm\al}-\bar{\bsm{u}}_0$, where $\bar{\bsm{u}}_0=T((\bsm{u}_0-\eps\bs1_m)|V_{\bsm\al})$, completing the proof of Theorem~\ref{equivform}.

\section{A case for rationally uniform directions}
\label{sect_rationally_uniform}

This section begins with an extended investigation of the zonotopes that arise in Theorem~\ref{equivform}, in fact, we see that for every direction vector $\bsm\al$ there are two lattice zonotopes that are associated to each other.
Once this is achieved, we find that there is a correspondence between rationally uniform directions and lattice zonotopes that are generated by vectors in \lgp.
Finally, we first solve the view-obstruction problem in the most general situation, before concentrating on the more illuminating case of rationally uniform directions.

\subsection{The associated zonotope of \texorpdfstring{$Z_{\boldsymbol\alpha}$}{Z\_{\textalpha}} }

We need to prepare ourselves with an auxiliary statement from linear algebra.
For abbreviation, we write $[m]=\{1,\ldots,m\}$, and $\binom{[m]}{k}$ for the family of $k$-element subsets of $[m]$.
Moreover, for any $I,J\in\binom{[m]}{k}$ and any matrix $\bsm V\in\R^{m\times m}$, we write $\bsm V_{I,J}\in\R^{k\times k}$ for the matrix that remains when striking out all rows of $\bsm V$ not indexed by $I$ and all columns of $\bsm V$ not indexed by $J$.
We shall denote the complement $[m]\sm I$ by $I^c$.

\begin{lemma}\label{lemMinorRelations}
Let $\bsm V=(\bsm v_1,\ldots,\bsm v_m)\in\R^{m\times m}$ be a matrix whose columns correspond to a basis of $\R^m$ and let $\bsm V^\star=(\bsm v^\star_1,\ldots,\bsm v^\star_m)$ be the matrix corresponding to the dual basis, that is, $\bsm v_i\cdot\bsm v^\star_j=\delta_{ij}$, for every $i,j\in[m]$.
For any $k\in[m]$ and any $I,J\in\binom{[m]}{k}$, we have
\[\det(\bsm V_{I,J})=\pm\det(\bsm V)\cdot\det(\bsm V^\star_{I^c,J^c}).\]
\end{lemma}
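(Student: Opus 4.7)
The strategy is to recognize this as a specialization of Jacobi's complementary minor identity. First, I would translate the dual basis relations $\bsm v_i \cdot \bsm v_j^\star = \delta_{ij}$ into a single matrix equation: they are equivalent to $\bsm V^\intercal \bsm V^\star = \bsm I_m$, so $\bsm V^\star = (\bsm V^\intercal)^{-1} = (\bsm V^{-1})^\intercal$. Under this identification, $\det(\bsm V^\star_{I^c, J^c})$ is (up to swapping the roles of the two index sets via the transpose) a minor of $\bsm V^{-1}$, and the lemma becomes a purely linear algebraic identity relating complementary minors of $\bsm V$ and of its inverse.

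Second, I would invoke Jacobi's complementary minor identity: for any invertible $A \in \R^{m \times m}$ and index sets $I, J \in \binom{[m]}{k}$,
\[
\det\bigl((A^{-1})_{I, J}\bigr) = \frac{(-1)^{s(I) + s(J)}}{\det(A)}\,\det(A_{J^c, I^c}),
\]
where $s(I) = \sum_{i \in I} i$. Applying this to $A = \bsm V^\intercal$ (so that $(A^{-1})_{I^c, J^c} = \bsm V^\star_{I^c, J^c}$ and $A_{J, I} = (\bsm V_{I, J})^\intercal$) yields $\det(\bsm V^\star_{I^c, J^c}) = \pm \det(\bsm V_{I,J})/\det(\bsm V)$, which rearranges exactly to the stated formula.

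For a self-contained derivation of Jacobi's identity, I would consider the $(m+k) \times (m+k)$ block matrix
\[
M = \begin{pmatrix} A & E_J \\ E_I^\intercal & \bsm 0 \end{pmatrix},
\]
where $E_I$ and $E_J$ are the $m \times k$ matrices whose columns are the standard basis vectors indexed by $I$ and by $J$, respectively. Computing $\det(M)$ in two different ways gives the identity: the Schur complement with respect to the invertible block $A$ produces $\pm\det(A)\det(E_I^\intercal A^{-1} E_J) = \pm\det(A)\det\bigl((A^{-1})_{I, J}\bigr)$, while the generalized Laplace expansion along the zero block (after appropriate row and column permutations that move $I^c$ and $J^c$ into place) produces $\pm\det(A_{J^c, I^c})$.

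The only real technical obstacle is the sign bookkeeping in these manipulations; fortunately the lemma only asserts the identity up to a sign, so the careful tracking of $(-1)^{s(I) + s(J)}$ versus the signs arising from the Schur/Laplace expansions can be absorbed uniformly into the $\pm$ and will not affect any subsequent application.
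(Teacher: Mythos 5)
Your proof is correct and complete, but it takes a genuinely different route than the paper. The paper works directly in the exterior algebra $\wedge^\bullet\R^m$: it uses the identity $\det(\bsm M)\,\bsm e_1\wedge\cdots\wedge\bsm e_m=(\bsm M\bsm e_1)\wedge\cdots\wedge(\bsm M\bsm e_m)$, rewrites $\det(\bsm V_{I,J})$ as the coefficient of $\bigwedge_{i\notin I}\bsm e_i\wedge\bigwedge_{j\in J}\bsm v_j$, and then applies $\bsm V^{-1}$ to pull out a factor of $\det(\bsm V)$ and land on a complementary minor of $\bsm V^{-1}$. Your argument instead reduces to the classical Jacobi complementary-minor identity, and (for self-containment) derives that identity via the Schur complement / generalized Laplace expansion of the bordered block matrix $M=\bigl(\begin{smallmatrix} A & E_J\\ E_I^\intercal & \bsm 0\end{smallmatrix}\bigr)$. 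Both are standard and correct; the exterior-algebra proof is short and sign-transparent once one accepts the wedge formalism, while your block-matrix route is more elementary in its prerequisites and makes the connection to a named identity explicit. One small bookkeeping point worth stating in a final write-up: after setting $A=\bsm V^\intercal$, you apply Jacobi with index sets $I^c,J^c$ (of size $m-k$), so that $(A^{-1})_{I^c,J^c}=\bsm V^\star_{I^c,J^c}$ and $A_{J,I}=(\bsm V_{I,J})^\intercal$; spelling out that you are invoking the identity for the complementary sets avoids any confusion about which of $I,J$ versus $I^c,J^c$ appears where. Since the lemma is stated only up to sign, your decision not to track $(-1)^{s(I)+s(J)}$ precisely is fine.
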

\begin{proof}
Our arguments are based on the theory of the exterior algebra of $\R^m$, for which we refer the reader to~\cite[Ch.~XVI]{birkhoffmaclane1979algebra}.
The main ingredient is the fact that for any matrix $\bsm M\in\R^{m\times m}$, we have
\[\det(\bsm M)\cdot \bsm e_1\wedge\ldots\wedge\bsm e_m=(\bsm M\bsm e_1)\wedge\ldots\wedge(\bsm M\bsm e_m).\]
Note also, that $\det(\bsm V_{I,J})=\pm\det(\tilde{\bsm V}_{I,J})$, where $\tilde{\bsm V}_{I,J}=\left(\bsm e_i \mid i\notin I, \bsm v_j \mid j\in J\right)$.
Applying these two identities repeatedly yields
\begin{align*}
\det(\bsm V_{I,J})\cdot \bsm e_1\wedge\ldots\wedge\bsm e_m&=\pm\bigwedge_{i\notin I}\bsm e_i\wedge\bigwedge_{j\in J}\bsm v_j\\
&=\pm\det(\bsm V)\cdot \bigwedge_{i\notin I}(\bsm V^{-1})_i\wedge\bigwedge_{j\in J}\bsm e_j\\
&=\pm\det(\bsm V)\cdot\det(\bsm V^{-1}_{J^c,I^c})\cdot \bsm e_1\wedge\ldots\wedge\bsm e_m.
\end{align*}
Therefore, $\det(\bsm V_{I,J})=\pm\det(\bsm V)\cdot\det(\bsm V^{-1}_{J^c,I^c})$.
Since $\bsm V^\star=\bsm V^{-\intercal}$, we have $\det(\bsm V^{-1}_{J^c,I^c})=\det(\bsm V^{-\intercal}_{I^c,J^c})=\det(\bsm V^\star_{I^c,J^c})$, which finishes the proof.
\end{proof}

\begin{rem}
One could write down the sign that appears in Lemma~\ref{lemMinorRelations} explicitely, depending on the chosen sets $I$ and $J$.
However, we refrain here from doing so, since it is not important for our purposes.
\end{rem}

Extending the investigation from the previous section, we find that a given $\bsm\al\in\R^m$ actually gives rise to two lattice zonotopes that are associated to each other in the sense of Shephard~\cite{shephard1974combinatorial}.
In order to make this precise, let as before $\bsm A=(\bsm a_1,\ldots,\bsm a_n)^\intercal$, where $\set{\bsm a_1,\ldots,\bsm a_n}$ is a basis of $\Lambda_{\bsm\al}$.
Now, we can extend this basis to a basis $\set{\bsm a_1,\ldots,\bsm a_n,\ldots,\bsm a_m}$ of $\Z^m$ since $\Lambda_{\bsm\al}=V_{\bsm\al}\cap\Z^m$.
Taking the dual basis $\set{\bsm a_1^\star,\ldots,\bsm a_m^\star}$ thereof, we find that $\set{\bsm a_{n+1}^\star,\ldots,\bsm a_m^\star}$ is a basis of $V_{\bsm\al}^\perp\cap\Z^m$, and we write $\bsm A^\perp=(\bsm a_{n+1}^\star,\ldots,\bsm a_m^\star)^\intercal$ (see, e.g.~\cite[Ch.~1]{martinet2003perfect}).
Now, using the map $T:\R^m\to\R^n\times\R^{m-n}$ that sends $\bsm a_i^\star$ to the coordinate unit vector $\bsm e_i$, for all $i\in[m]$, we obtain the lattice zonotopes
\[Z_{\bsm\al}=T\left(C_m|V_{\bsm\al}\right)\subseteq\R^n\quad\text{and}\quad Z_{\bsm\al}^\perp=T\left(C_m|V_{\bsm\al}^\perp\right)\subseteq\R^{m-n},\]
which are generated by the columns of the matrices $\bsm A$ and $\bsm A^\perp$, respectively.

It is well-known that a zonotope $Z=\sum_{i=1}^m[\bsm0,\bsm z_i]\subseteq\R^n$ can be dissected into translates of the parallelepipeds $Z_I=\sum_{i\in I}[\bsm0,\bsm z_i]$, $I\in\binom{[m]}{n}$ (see, e.g.~\cite{shephard1974combinatorial}).
The volume of the parallelepipeds that $Z_{\bsm\al}$ and $Z_{\bsm\al}^\perp$ are composed of are related as follows.

\begin{prop}\label{propVolParalleps}
Let $\bsm\al\in\R^m$.
Then, for every $I\in\binom{[m]}{n}$, we have
\[\vol_n((Z_{\bsm\al})_I)=\vol_{m-n}((Z_{\bsm\al}^\perp)_{I^c}).\]
In particular, if $\bsm\al\in\Z_{>0}^m$ with $\gcd(\alpha_1,\ldots,\alpha_m)=1$, then for every $i\in[m]$,
\[\alpha_i=\vol_{m-1}((Z_{\bsm\al})_{[m]\sm\set{i}}).\]
\end{prop}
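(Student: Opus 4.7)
The plan is to identify each of the two volumes with the absolute value of a determinant of a suitable submatrix and then invoke Lemma~\ref{lemMinorRelations} to equate them. First I would introduce the $m\times m$ matrix $\bsm M$ whose rows are $\bsm a_1^\intercal,\ldots,\bsm a_m^\intercal$; since $\{\bsm a_1,\ldots,\bsm a_m\}$ is a $\Z$-basis of $\Z^m$, we have $|\det(\bsm M)|=1$, and the rows of $\bsm M^{-\intercal}$ are exactly $(\bsm a_1^\star)^\intercal,\ldots,(\bsm a_m^\star)^\intercal$. Consequently, $\bsm A$ is the submatrix $\bsm M_{[n],:}$ and $\bsm A^\perp$ is the submatrix $\bsm M^{-\intercal}_{[n]^c,:}$, so that $\bsm A_{:,I}=\bsm M_{[n],I}$ and $\bsm A^\perp_{:,I^c}=\bsm M^{-\intercal}_{[n]^c,I^c}$. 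Since $\vol_n((Z_{\bsm\al})_I)=|\det(\bsm A_{:,I})|$ and $\vol_{m-n}((Z_{\bsm\al}^\perp)_{I^c})=|\det(\bsm A^\perp_{:,I^c})|$, applying Lemma~\ref{lemMinorRelations} to $\bsm V=\bsm M$ (for which $\bsm V^\star=\bsm M^{-\intercal}$) with row index set $[n]$ and column index set $I$ will yield
\[\det(\bsm M_{[n],I})=\pm\det(\bsm M)\cdot\det(\bsm M^{-\intercal}_{[n]^c,I^c}),\]
and passing to absolute values, using $|\det(\bsm M)|=1$, settles the first assertion.

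For the second claim I would specialize to $\bsm\al\in\Z_{>0}^m$ with $\gcd(\alpha_1,\ldots,\alpha_m)=1$. In this case $\Lambda_{\bsm\al}=\bsm\al^\perp\cap\Z^m$ has rank $m-1$, so $n=m-1$ and $V_{\bsm\al}^\perp=\R\bsm\al$. A short Bezout argument (choose $\lambda_i\in\Z$ with $\sum_i\lambda_i\alpha_i=1$; then any $c\bsm\al\in\Z^m$ satisfies $c=\sum_i\lambda_i(c\alpha_i)\in\Z$) shows $V_{\bsm\al}^\perp\cap\Z^m=\Z\bsm\al$. Combining this with the general fact that $\{\bsm a_{n+1}^\star,\ldots,\bsm a_m^\star\}$ is a $\Z$-basis of $V_{\bsm\al}^\perp\cap\Z^m$ — which I would verify in passing using the relations $\bsm a_i\cdot\bsm a_j^\star=\delta_{ij}$ together with the fact that $\{\bsm a_j^\star\}$ is a $\Z$-basis of $\Z^m$ — one concludes $\bsm a_m^\star=\pm\bsm\al$ and hence $\bsm A^\perp=\pm\bsm\al^\intercal$. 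For $I=[m]\sm\{i\}$ the complementary parallelepiped $(Z_{\bsm\al}^\perp)_{\{i\}}\subseteq\R$ is then a segment of length $\alpha_i$, so the first part immediately gives $\vol_{m-1}((Z_{\bsm\al})_{[m]\sm\{i\}})=\alpha_i$.

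The only real obstacle I foresee is careful bookkeeping: Lemma~\ref{lemMinorRelations} is phrased with the basis placed in the \emph{columns} of $\bsm V$, while my $\bsm M$ stores the basis $\{\bsm a_i\}$ in its \emph{rows}, so I must track which submatrices of $\bsm M$ and $\bsm M^{-\intercal}$ correspond to $\bsm A$ and $\bsm A^\perp$. The sign ambiguity in the lemma is harmless since everything ultimately appears under an absolute value, and the unimodularity $|\det(\bsm M)|=1$ eliminates the one remaining prefactor.
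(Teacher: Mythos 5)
Your proof is correct and follows essentially the same route as the paper: identify the volumes of the parallelepipeds with absolute values of minors and invoke Lemma~\ref{lemMinorRelations}, then handle $\det=\pm1$ and specialize. The paper applies the lemma to $\bsm V=(\bsm a_1,\ldots,\bsm a_m)$ (basis in the columns) with $\det(\bsm V_{I,[n]})=\pm\det(\bsm V)\det(\bsm V^\star_{I^c,[m]\sm[n]})$, which is exactly the transpose of your formulation in terms of $\bsm M=\bsm V^\intercal$; the bookkeeping you flagged as the only obstacle works out consistently in both conventions. Your Bezout argument and the verification that $\{\bsm a_{n+1}^\star,\ldots,\bsm a_m^\star\}$ is a $\Z$-basis of $V_{\bsm\al}^\perp\cap\Z^m$ are details the paper leaves implicit (with a citation), but they are accurate and welcome.
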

\begin{proof}
The volume of a parallelepiped is given by the absolute value of the determinant of its generators.
So, we can just apply Lemma~\ref{lemMinorRelations} to the matrices $\bsm V=(\bsm a_1,\ldots,\bsm a_m)$ and $\bsm V^\star=(\bsm a_1^\star,\ldots,\bsm a_m^\star)$, and obtain
\[\vol_n((Z_{\bsm\al})_I)=|\det(\bsm V_{I,[n]})|=|\det(\bsm V)|\cdot|\det(\bsm V^\star_{I^c,[m]\sm[n]})|=\vol_{m-n}((Z_{\bsm\al}^\perp)_{I^c}),\]
where we used that $\det(\bsm V)=\pm1$.

In the special case that $\bsm\al\in\Z^m_{>0}$ with $\gcd(\alpha_1,\ldots,\alpha_m)=1$, we have $V_{\bsm\al}^\perp=\lin\{\bsm\al\}$ and hence $V_{\bsm\al}^\perp\cap\Z^m=\Z\bsm\al$.
Therefore, we have $\bsm A^\perp=\bsm\al^\intercal$ and thus $Z_{\bsm\al}^\perp=\sum_{i=1}^m[0,\alpha_i]$, implying the claim.
\end{proof}

\subsection{Rationally uniform vectors}
\label{subsect_rationally_uniform}

Next, we study the situation when~$\bsm\al$ is \emph{rationally uniform}.
We show that $\bsm\al$ possesses this property, if and only if the vectors generating $Z_{\bsm\al}$ are in \lgp.
Two auxiliary statements are needed.

\begin{prop}
Let $\bsm{\al}\in\R^m$ and let $\Lambda_{\bsm\al},V_{\bsm\al}$ be defined as above, in particular, let $n=\dim(V_{\bsm\al})$.
Then, $\dim_\Q(\bsm\al)=m-n$.
\end{prop}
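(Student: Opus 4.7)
The plan is to identify $\dim_{\Q}(\bsm\al)$ with the codimension of a natural $\Q$-subspace of $\Q^m$ and then to match this subspace with the $\Q$-span of $\La_{\bsm\al}$. Write $d=\dim_{\Q}(\bsm\al)$. Consider the $\Q$-linear map $\vphi\colon \Q^m\to\R$ defined by $\vphi(\bsm r)=\bsm r\cdot\bsm\al$. Its image is precisely the $\Q$-linear span of the coordinates $\al_1,\dotsc,\al_m$, which by definition has $\Q$-dimension $d$. Hence the kernel
\[W:=\set{\bsm r\in\Q^m\mid \bsm r\cdot\bsm\al=0}\]
is a $\Q$-subspace of $\Q^m$ of dimension $m-d$.

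Next, I would show that $W$ is exactly the $\Q$-span of $\La_{\bsm\al}$ inside $\Q^m$. The inclusion $\La_{\bsm\al}\otimes_{\Z}\Q\ssq W$ is immediate from the definition of $\La_{\bsm\al}$. For the reverse inclusion, I would take an arbitrary $\bsm r\in W$ and clear denominators: multiplying by a suitable positive integer yields a vector in $\Z^m$ that still satisfies $\bsm r\cdot\bsm\al=0$, hence lies in $\La_{\bsm\al}$. Thus $W=\La_{\bsm\al}\otimes_{\Z}\Q$, and in particular the $\Z$-rank of $\La_{\bsm\al}$ equals $m-d$.

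Finally, since $\La_{\bsm\al}$ is a subgroup of $\Z^m$, it is free abelian, and the $\R$-dimension of its real span $V_{\bsm\al}$ equals its $\Z$-rank (one direction follows by choosing an integer basis of $\La_{\bsm\al}$, which remains $\R$-linearly independent; the other from the fact that no more than $\mathrm{rank}(\La_{\bsm\al})$ elements of $\La_{\bsm\al}$ can be $\R$-linearly independent). Therefore $n=\dim_{\R}(V_{\bsm\al})=m-d$, which is the desired identity $\dim_{\Q}(\bsm\al)=m-n$.

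The only mildly delicate point is the clearing-of-denominators step together with the standard equivalence between the $\Z$-rank of a subgroup of $\Z^m$ and the $\R$-dimension of its real span; both are routine but deserve explicit mention. No deeper obstacle is expected.
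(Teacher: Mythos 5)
Your proof is correct and follows essentially the same route as the paper: the paper also considers the $\Q$-linear map $f(\bsm\ell)=\bsm\ell\cdot\bsm\al$ on $\Q^m$, identifies $\dim_\Q(\image(f))=\dim_\Q(\bsm\al)$ and $\lin(\ker(f))=V_{\bsm\al}$, and concludes by rank–nullity. You simply spell out the two points the paper dispatches with ``clearly'' — the denominator-clearing identification of $\ker(f)$ with $\La_{\bsm\al}\otimes_\Z\Q$, and the equality between the $\Q$-dimension of a rational subspace and the $\R$-dimension of its real span.
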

\begin{proof}
Let $f:\Q^m\to\R$ be the $\Q$-linear map defined by $f(\bsm\ell)=\bsm\ell\cdot\bsm\al$.
Clearly, $V_{\bsm\al}=\lin(\ker(f))$ and $\dim_\Q(\bsm\al)=\dim_\Q(\image(f))$.
Hence, by the rank-nullity theorem we get $m=\dim_\Q(\ker(f))+\dim_\Q(\image(f))=n+\dim_\Q(\bsm\al)$.
\end{proof}

\begin{prop}\label{propMinorsAndSubalphas}
Let $\bsm\al\in\R^m$ and let $\{\bsm a_1,\ldots,\bsm a_n\}$ be a basis of $\Lambda_{\bsm\al}$.
Define $\bsm A\in\Z^{n\times m}$, as before, as the matrix whose $i$th row is given by $\bsm a_i$.
For a subset $I\subseteq[m]$, let $\bsm A_I$ be the submatrix of $\bsm A$ consisting of those columns of~$\bsm A$ that are indexed by~$I$, and define $\bsm\al_I$ analogously.
Then, if $|I|=n$,
\[\det(\bsm A_I)\neq 0\quad\Longleftrightarrow\quad\dim_\Q(\bsm\al_{I^c})=m-n.\]
\end{prop}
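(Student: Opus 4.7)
The plan is to reinterpret $\det(\bsm A_I)\neq 0$ as a geometric condition on the subspace $V_{\bsm\al}$, and then convert that into a rational-dependence statement about $\bsm\al_{I^c}$ by exploiting that $V_{\bsm\al}$ is a rational subspace of $\R^m$.

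First, the $i$th row of $\bsm A_I$ is exactly $\pi_I(\bsm a_i)$, where $\pi_I\colon\R^m\to\R^I$ is the coordinate projection that forgets the entries indexed by $I^c$. Since $\{\bsm a_1,\ldots,\bsm a_n\}$ is an $\R$-basis of $V_{\bsm\al}$ and $\dim(V_{\bsm\al})=n=\dim(\R^I)$, this immediately gives
\[
\det(\bsm A_I)\neq 0\ \Longleftrightarrow\ \pi_I|_{V_{\bsm\al}}\text{ is injective}\ \Longleftrightarrow\ V_{\bsm\al}\cap\R^{I^c}=\{\bsm{0}\},
\]
where $\R^{I^c}\subseteq\R^m$ denotes the coordinate subspace of vectors supported on $I^c$.

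Second, both $V_{\bsm\al}=\lin(\Lambda_{\bsm\al})$ and $\R^{I^c}$ are rational subspaces of $\R^m$, so their intersection is itself a rational subspace, and is therefore trivial if and only if it contains no nonzero integer vector. An integer vector in $V_{\bsm\al}\cap\R^{I^c}$ is exactly a nonzero $\bsm\ell\in\Z^m$ supported on $I^c$ with $\bsm\ell\cdot\bsm\al=0$; after restricting to the $I^c$ coordinates this is a nontrivial integer (equivalently, rational) relation $\bsm\ell'\cdot\bsm\al_{I^c}=0$ with $\bsm\ell'\in\Z^{I^c}$. The absence of such a relation is precisely the statement that the $m-n$ numbers $\alpha_j$, $j\in I^c$, are $\Q$-linearly independent, i.e.\ $\dim_\Q(\bsm\al_{I^c})=|I^c|=m-n$.

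The one step that is not purely linear-algebraic is the passage from "no nonzero real vector" to "no nonzero integer vector" in $V_{\bsm\al}\cap\R^{I^c}$; this rests on the standard observation that any nontrivial rational subspace of $\R^m$ contains nonzero integer vectors (clear denominators in any rational basis). I do not expect this, nor any other step, to present a real obstacle.
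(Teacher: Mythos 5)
Your proof is correct and follows essentially the same route as the paper: both arguments reduce $\det(\bsm A_I)\neq 0$ to the geometric condition $V_{\bsm\al}\cap L_{I^c}=\{\bsm 0\}$ (where $L_{I^c}=\lin\{\bsm e_i\mid i\in I^c\}$) and then pass to the $\Q$-independence of $\bsm\al_{I^c}$ by using that this intersection is a rational subspace, so it is trivial iff it contains no nonzero lattice point of $\Lambda_{\bsm\al}=V_{\bsm\al}\cap\Z^m$. The only cosmetic difference is how that geometric condition is reached: you read $\bsm A_I$ row-wise as the matrix of the coordinate projection $\pi_I|_{V_{\bsm\al}}$, while the paper reads it column-wise via $\ker(\bsm A)=V_{\bsm\al}^\perp$ and then dualizes ($V_{\bsm\al}^\perp\cap L_I=\{\bsm 0\}\Leftrightarrow V_{\bsm\al}\cap L_{I^c}=\{\bsm 0\}$); your version skips the duality step and is, if anything, slightly more direct.
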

\begin{proof}
Let $L_I=\lin\{\bsm e_i \mid i\in I\}$.
Since by definition $V_{\bsm\al}^\perp=\ker(\bsm A)$ and $\dim(V_{\bsm\al}^\perp)+\dim(L_I)=m$, the following equivalences hold:
\begin{align*}
\det(\bsm A_I)\neq 0 &\Longleftrightarrow \forall\bsm\ell\in\R^m,\supp(\bsm\ell)\subseteq I,\bsm A\cdot\bsm\ell=0 \Longrightarrow \bsm\ell=\bsm 0\\
&\Longleftrightarrow V_{\bsm\al}^\perp\cap L_I=\{\bsm 0\}\\
&\Longleftrightarrow V_{\bsm\al}\cap L_{I^c}=V_{\bsm\al}\cap L_I^\perp=\{\bsm 0\}.
\end{align*}
As $\dim(\Lambda_{\bsm\al})=\dim(V_{\bsm\al})$ and $\lin_\Q(\Lambda_{\bsm\al})$ is dense in $V_{\bsm\al}$, the latter is equivalent to the fact that every $\bsm\ell\in\Z^m$ with $\bsm\ell\cdot\bsm\al=0$ and $\supp(\bsm\ell)\subseteq I^c$ is already the zero-vector, that is, $\bsm\ell=\bsm 0$.
In other words, the coordinates of $\bsm\al$ that are indexed by $I^c$ are linearly independent over~$\Q$, which means that $\dim_\Q(\bsm\al_{I^c})=m-n$ as claimed.
\end{proof}

\begin{cor}\label{cor_ratunif_lgp}
\ \\[-1em]
\begin{enumerate}[label=\roman*)]
 \item The vector $\bsm\al\in\R^m$ is rationally uniform if and only if the zonotope~$Z_{\bsm\al}$ is generated by vectors in \lgp.
 \item The zonotope $Z_{\bsm\al}$ is generated by vectors in \lgp\, if and only if the associated zonotope $Z_{\bsm\al}^\perp$ is.
\end{enumerate}
\end{cor}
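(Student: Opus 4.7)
The plan is to reduce both equivalences to the minor identities established earlier, with Proposition~\ref{propMinorsAndSubalphas} doing the work for part (i) and Lemma~\ref{lemMinorRelations} (equivalently, Proposition~\ref{propVolParalleps}) doing the work for part (ii). In both cases the key observation is that ``linear general position'' for a zonotope's generating set is a statement about all top-order minors of its generating matrix being nonzero, and we already have exact equivalents for these minors in terms of $\bsm\al$ and of the dual data.

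For part (i), I would unwind the definitions: $Z_{\bsm\al}$ is generated by the columns of the matrix $\bsm A\in\Z^{n\times m}$ whose rows form a basis of $\Lambda_{\bsm\al}$, so $Z_{\bsm\al}$ is generated by vectors in \lgp\ precisely when $\det(\bsm A_I)\neq 0$ for every $I\in\binom{[m]}{n}$. By Proposition~\ref{propMinorsAndSubalphas}, this is equivalent to $\dim_\Q(\bsm\al_{I^c})=m-n$ for every such $I$. Since $|I^c|=m-n=\dim_\Q(\bsm\al)$ by the preceding proposition, this is exactly the statement that every collection of $\dim_\Q(\bsm\al)$ coordinates of $\bsm\al$ is $\Q$-linearly independent, i.e.\ that $\bsm\al$ is rationally uniform.

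For part (ii), the zonotope $Z_{\bsm\al}^\perp\subseteq\R^{m-n}$ is generated by the columns of $\bsm A^\perp\in\Z^{(m-n)\times m}$, so $Z_{\bsm\al}^\perp$ is in \lgp\ iff $\det(\bsm A^\perp_{J})\neq 0$ for every $J\in\binom{[m]}{m-n}$. Lemma~\ref{lemMinorRelations}, applied to the matrices $(\bsm a_1,\dots,\bsm a_m)$ and $(\bsm a^\star_1,\dots,\bsm a^\star_m)$ from Subsection~3.1, yields $|\det(\bsm A_I)|=|\det(\bsm A^\perp_{I^c})|$ for every $I\in\binom{[m]}{n}$ (this is the minor identity underlying Proposition~\ref{propVolParalleps}). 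As $I\mapsto I^c$ is a bijection $\binom{[m]}{n}\to\binom{[m]}{m-n}$, the vanishing pattern of the top minors of $\bsm A$ and of $\bsm A^\perp$ coincide, proving the equivalence.

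Neither step should present a genuine obstacle, since both halves are essentially bookkeeping on top of the two propositions already in hand; the only small care required is to note that, in part (i), $m-n$ is simultaneously the cardinality of $I^c$ and the value of $\dim_\Q(\bsm\al)$, which is what makes ``every $m-n$ coordinates linearly independent'' match the definition of rational uniformity exactly.
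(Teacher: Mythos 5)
Your proposal is correct and follows essentially the same route as the paper: part (i) is exactly the unwinding of Proposition~\ref{propMinorsAndSubalphas} (noting that $I \mapsto I^c$ bijects $\binom{[m]}{n}$ with $\binom{[m]}{m-n}$, and that $m-n=\dim_\Q(\bsm\al)$ so the condition matches the definition of rational uniformity), and part (ii) is the minor identity $|\det(\bsm A_I)|=|\det(\bsm A^\perp_{I^c})|$ extracted from Lemma~\ref{lemMinorRelations} with $\det(\bsm V)=\pm1$. The paper states part (ii) follows from both Lemma~\ref{lemMinorRelations} and Proposition~\ref{propMinorsAndSubalphas}; your observation that Lemma~\ref{lemMinorRelations} alone already suffices (once one knows the columns of $\bsm A$ and $\bsm A^\perp$ generate the two zonotopes) is a mild and valid clarification, not a divergence in method.
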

\begin{proof}
\romannumeral1): By the analysis in Subsection~\ref{subsect_zonotopes_vo}, we can take a map $T$ such that~$Z_{\bsm\al}$ is generated by the columns of the matrix $\bsm A$, as described in Proposition~\ref{propMinorsAndSubalphas}.
The conclusion of the latter proves the desired fact.

Part \romannumeral2) is a direct consequence of Lemma~\ref{lemMinorRelations} and Proposition~\ref{propMinorsAndSubalphas}.
\end{proof}

Shephard~\cite{shephard1974combinatorial} proves Corollary~\ref{cor_ratunif_lgp}~\romannumeral2) combinatorially in the language of \emph{cubical zonotopes}.

So far, we have only seen zonotopes attached to a vector $\bsm\al\in\R^m$, and Corollary~\ref{cor_ratunif_lgp} gives us a characterization of such zonotopes that are generated by vectors in \lgp.
This construction can be reversed: Suppose we have a lattice zonotope $Z\ssq\R^n$, generated by $m$ vectors in \lgp.
Let~$\bsm A$ be the $n\times m$ matrix whose columns correspond to these vectors.
Taking the rows of $\bsm A$, we obtain a basis of an $n$-dimensional lattice $\La$ in $\R^m$.
Let~$V$ be the space spanned by $\La$, and let $V^{\perp}$ be its orthogonal complement.
Now, one can choose a vector $\bsm\al\in V^{\perp}$, that is not orthogonal to any lattice vector, except from those of $\La$.
Indeed, if a vector $\bsm\be$ does not belong to~$V$, then $\bsm\be^{\perp}\cap V^{\perp}$ is a codimension-one subspace of $V^{\perp}$.
So, $\bsm\al$ needs to avoid countably many hyperplanes of $V^{\perp}$, and as is well-known, this is indeed possible, because countably many hyperplanes cannot cover the entire space.
We conclude by noting that in this case, $\La=\La_{\bsm\al}$, and the zonotopes $Z$ and $Z_{\bsm\al}$ are unimodularly equivalent.

If, for $\bsm\al\in\R^m$, the conditions {\bf (V1)}-{\bf (V4)} in Theorem~\ref{equivform} hold for all $\bsm u_0\in\R^m$ and some $\eps>0$, then we can deduce that any translate of $(1-2\eps)Z_{\bsm\al}$ contains a
lattice point, or in other words,
\begin{align*}
\mu(Z_{\bsm\al})&\leq 1-2\eps.
\end{align*}
This interpretation of the covering radius is classical; we refer, for instance, to~\cite[\S 13]{gruberlekker1987geometry}.
Therefore, Theorem~\ref{equivform} implies that
\begin{align}
\sup\mu(Z) &= 1-2\eps(n,m),\label{eqnEpsSupMu}
\end{align}
where the supremum is taken over all lattice zonotopes $Z\ssq\R^k$ with $m$ generators in \lgp, and where $k\leq n$.

In the case $n=m-1$, Theorem~\ref{Sch} gives us $\eps(m-1,m)=1/(2m)$, and thus, a reinterpretation of Schoenberg's result is as follows.

\begin{thm}\label{zonocodim1}
 Let $Z=\sum_{i=1}^{n+1}[\bs 0,\bsm z_i]$, where $\set{\bsm z_1,\dotsc,\bsm z_{n+1}}\ssq\ZZ^n$ is in \lgp. Then $\mu(Z)\leq \frac{n}{n+1}$. Equality is attained for
 $\bsm z_i=\bsm e_i$, $1\leq i\leq n$, $\bsm z_{n+1}=\bs 1_n$.
\end{thm}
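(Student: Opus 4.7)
The plan is to derive this result as a direct restatement of Schoenberg's Theorem~\ref{Sch} via the equivalences of Theorem~\ref{equivform} and the identity~\eqref{eqnEpsSupMu}. Since Schoenberg's theorem translates into $\eps(n,n+1) = 1/(2(n+1))$, identity~\eqref{eqnEpsSupMu} yields
\[\sup \mu(Z) \,=\, 1 - 2\eps(n,n+1) \,=\, \tfrac{n}{n+1},\]
where the supremum ranges over all lattice zonotopes generated by $n+1$ vectors in \lgp\ in dimension at most~$n$. This immediately delivers the claimed upper bound for every $n$-dimensional zonotope $Z$ of this form.

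To spell out the reduction, I would take any lattice zonotope $Z \subseteq \R^n$ generated by $\set{\bsm{z}_1, \dotsc, \bsm{z}_{n+1}}$ in \lgp. By the reverse construction at the end of Subsection~\ref{subsect_rationally_uniform}, there exists $\bsm{\al} \in \R^{n+1}$ with $Z_{\bsm{\al}}$ unimodularly equivalent to $Z$, and Corollary~\ref{cor_ratunif_lgp}~\romannumeral1) guarantees that $\bsm{\al}$ is rationally uniform; since $\dim_\Q(\bsm{\al}) = 1$, this just means all coordinates of $\bsm{\al}$ are nonzero, so the motion $\bbm(\bsm{u}_0,\bsm{\al})$ is nontrivial. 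Because $\mu$ is invariant under unimodular transformations, applying Theorem~\ref{Sch} with $\eps = 1/(2(n+1))$ and combining with Theorem~\ref{equivform} shows that every translate of $\tfrac{n}{n+1} Z_{\bsm{\al}}$ contains a lattice point, and hence $\mu(Z) \leq \tfrac{n}{n+1}$.

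For the equality case, consider $\bsm{\al} = \bs{1}_{n+1}$ with basis $\set{\bsm{e}_i - \bsm{e}_{n+1} : 1 \leq i \leq n}$ of $\Lambda_{\bsm{\al}}$. The associated matrix $\bsm{A}$ has columns $\bsm{e}_1, \dotsc, \bsm{e}_n, -\bs{1}_n$, so $Z_{\bs{1}_{n+1}}$ is a lattice-translate of the stated extremal zonotope $Z = \sum_{i=1}^n [\bs{0}, \bsm{e}_i] + [\bs{0}, \bs{1}_n]$. Invoking Schoenberg's explicit example $\bsm{u}_0 = (0, 1/(n+1), \dotsc, n/(n+1))^\intercal$, which avoids the interior of $[1/(2(n+1)), 1 - 1/(2(n+1))]^{n+1}$, condition~{\bf (V1)} fails for every $\eps > 1/(2(n+1))$. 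By Theorem~\ref{equivform}, $(1 - 2\eps)Z_{\bs{1}_{n+1}}$ then admits a translate disjoint from~$\Z^n$, and letting $\eps \searrow 1/(2(n+1))$ gives $\mu(Z) \geq \tfrac{n}{n+1}$.

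The only subtle point is the equality case: Schoenberg's extremal motion only touches the smaller cube on its boundary, so one must pass to the limit (or equivalently identify the boundary lattice point explicitly and use the closedness of $Z$) in order to conclude $\mu(Z) = \tfrac{n}{n+1}$ rather than a strict inequality. The limit argument above handles this cleanly.
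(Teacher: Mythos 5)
Your proof is correct and follows essentially the same approach as the paper: the paper presents Theorem~\ref{zonocodim1} precisely as a reinterpretation of Schoenberg's Theorem~\ref{Sch} via the equivalences of Theorem~\ref{equivform} and identity~\eqref{eqnEpsSupMu}, exactly as you do. You have spelled out the reduction and the extremal example in more detail than the paper, but the underlying idea — apply Schoenberg with $\eps = 1/(2(n+1))$, use the reverse construction to realize any $Z$ as some $Z_{\bsm\alpha}$, and verify the equality case with $\bsm\alpha = \bsm 1_{n+1}$ and Schoenberg's extremal starting point — is identical.
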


The case $n=1$ can also be treated easily in terms of this zonotopal approach.
Indeed, a one-dimensional lattice zonotope $Z$ that is generated by~$m$ vectors in \lgp\ is just an interval with integer endpoints and length at least~$m$.
Its covering radius is just the inverse of its length, thus $\mu(Z)\geq1/m$, which in view of~\eqref{eqnEpsSupMu} translates into $\eps(1,m)\leq(m-1)/(2m)$.
An example showing that this is actually an identity is given by any $\bsm\al\in\R^m$ such that $\La_{\bsm\al}=\Z\cdot\bsm 1_m$, because this yields an interval $Z_{\bsm\al}$ of length exactly~$m$.
It is easy to see that any $\bsm\al=(\al_1,\ldots,\al_{m-1},-\alpha_1-\ldots-\alpha_{m-1})^\intercal$ with $\set{\alpha_1,\ldots,\alpha_{m-1}}$ linearly independent over~$\Q$ will do the job.
Hence, $\eps(1,m)=(m-1)/(2m)$.

Together with the trivial cases $n=0$ and $n=m$ the above considerations show that Conjecture~\ref{mainconj} holds for all $n\in\set{0,1,m-1,m}$.
The question remains in the intermediate cases, where we obtain a weaker bound, namely Theorem~\ref{thmCoveringRadiusAsymptotics} (see Section~\ref{sect_flatness} for details).




\subsection{The monotonicity of the function \texorpdfstring{$\eps(n,m)$}{\textepsilon(n,m)}}

Remember that one of our main interests is to determine, or at least estimate, the supremum among all $\eps\geq0$ such that the equivalent statements in Theorem~\ref{equivform} hold for every starting point $\bsm u_0\in\R^m$ and every nontrivial direction $\bsm\al\in(\R\sm\set{0})^m$ with $\dim_{\Q}(\bsm\al)=m-n$.

We first see that in this generality the problem can be solved exactly but depends only on the parameter~$n$.
To this end, let us consider the numbers
\begin{align*}
\ol\eps(n,m)=\sup\big\{\eps\geq0 \mid &\,{\bf (Vi)}\text{ holds for any }\bsm u_0\in\R^m\text{ and }\bsm\al\in(\R\sm\set{0})^m\\
&\text{ such that }\dim_{\Q}(\bsm\al)\geq m-n\big\}.
\end{align*}

\begin{prop}\label{propPropertiesBarEpsFunction}
 Let $n,m\in\N$ be such that $n\leq m$. Then,
\begin{enumerate}[label=\roman*)]
 \item $\ol\eps(n,m+1)\leq\ol\eps(n,m)$, and
 \item if $n<m$, then $\ol\eps(n,m)=\frac1{2(n+1)}$.
 \end{enumerate}
\end{prop}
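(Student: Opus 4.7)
My plan for (i) is a standard lifting: given $\eps>\ol\eps(n,m)$ and a witness $(\bsm u_0,\bsm\al)$ whose motion avoids $[\eps,1-\eps]^m$, insert an additional coordinate with a nonzero value $\al_0$ that is $\Q$-linearly independent of the entries of $\bsm\al$, together with any starting value. The augmented pair then satisfies $\dim_\Q(\bsm\al')=\dim_\Q(\bsm\al)+1\geq(m+1)-n$, and its billiard ball motion projects onto the original in the last $m$ coordinates, so at every time some of those lies outside $[\eps,1-\eps]$; hence the augmented motion avoids $[\eps,1-\eps]^{m+1}$. This yields $\ol\eps(n,m+1)\leq\eps$, and the conclusion of (i) follows on letting $\eps\downarrow\ol\eps(n,m)$. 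The upper bound in (ii) then follows by iterating (i) down to the base case $\ol\eps(n,n+1)$, where the constraint $\dim_\Q(\bsm\al)\geq 1$ is automatic for nonzero $\bsm\al$ and Theorem~\ref{Sch} gives $1/(2(n+1))$.

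The lower bound $\ol\eps(n,m)\geq 1/(2(n+1))$ is the substantive step. My plan is to partition $[m]$ into rational equivalence classes $C_1,\ldots,C_k$ of $\bsm\al$ under the relation $i\sim j \Leftrightarrow \al_i/\al_j\in\Q$. Picking a representative $\beta_i$ per class, one writes $\al_j=c_j\beta_{i(j)}$ with $c_j\in\Q\sm\set{0}$; since any two classes with $\Q$-proportional representatives would in fact merge, the $\beta_i$ are $\Q$-linearly independent, and therefore the number of classes equals $k=\dim_\Q(\bsm\al)$. Combined with $k\geq m-n$ and $\sum_i|C_i|=m$, this forces the key uniform bound $\max_i|C_i|\leq m-(k-1)\leq n+1$. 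Kronecker's Theorem~\ref{thmKronecker} applied to the $\Q$-linearly independent tuple $(\beta_1,\ldots,\beta_k)$ identifies the closure of the motion in $\TT^m$ with the image of the affine map $(s_1,\ldots,s_k)\mapsto(u_{0,j}+c_j s_{i(j)})_{j\in[m]}$, and consequently the parameters $s_i$ may be chosen independently across classes.

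Within each class $C_i$ of size $m_i\leq n+1$, clearing denominators of the nonzero rationals $c_j$ reduces the simultaneous condition $u_{0,j}+c_j s_i\in[\eps,1-\eps]\pmod 1$ (for $j\in C_i$) to the statement that a billiard ball motion in $[0,1]^{m_i}$ with a nontrivial integer direction intersects $[\eps,1-\eps]^{m_i}$. Theorem~\ref{Sch} supplies this for every $\eps\leq 1/(2m_i)$, and since each $m_i\leq n+1$, the common threshold $\eps\leq 1/(2(n+1))$ handles every class simultaneously, completing the lower bound. The delicate point is the combinatorial identity $k=\dim_\Q(\bsm\al)$ combined with the hypothesis $k\geq m-n$, which is what converts the rational-dimension assumption into the uniform class-size bound $m_i\leq n+1$.
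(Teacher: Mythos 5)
Part (i) of your argument is correct; it is the contrapositive form of the paper's lifting argument (the paper lifts an arbitrary admissible pair and projects back, you lift a witness to failure), and the upper bound in (ii), obtained by iterating (i) down to $\ol\eps(n,n+1)$ where the constraint $\dim_\Q(\bsm\al)\geq 1$ is vacuous and Schoenberg applies, matches the paper's chain $\ol\eps(n,m)\leq\ldots\leq\ol\eps(n,n+1)$.

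For the lower bound in (ii) you take a genuinely different route from the paper (the paper exhibits a single extremal $\bsm\al$ and appeals to the covering-radius identity), but your argument contains a real gap. You assert that since the representatives $\beta_1,\ldots,\beta_k$ of the rational-proportionality classes are \emph{pairwise} non-proportional over $\Q$, they must be $\Q$-linearly independent, and conclude $k=\dim_\Q(\bsm\al)$. Pairwise $\Q$-independence does not imply joint $\Q$-independence: for $\bsm\al=(\sqrt2,\sqrt3,\sqrt2+\sqrt3,\sqrt2+\sqrt3)$ the classes are $\{1\},\{2\},\{3,4\}$ with representatives $\sqrt2,\sqrt3,\sqrt2+\sqrt3$, which are pairwise non-proportional but satisfy $\beta_1+\beta_2-\beta_3=0$, so $k=3$ while $\dim_\Q(\bsm\al)=2$. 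The class-size estimate survives this (one always has $\dim_\Q(\bsm\al)\leq k$, hence $k\geq m-n$ and $\max_i|C_i|\leq m-(k-1)\leq n+1$), but the crucial consequence you draw does not: the parameters $s_1,\ldots,s_k$ cannot in general be chosen independently, since Kronecker's theorem applied to a $\Q$-linearly \emph{dependent} tuple $(\beta_1,\ldots,\beta_k)$ yields an orbit closure that is a proper subtorus of $\TT^k$ (in the example above, the closure is cut out by $s_1+s_2-s_3\in\Z$). Consequently the reduction to $k$ independent Schoenberg problems, one per class, breaks down, and the inequality $\ol\eps(n,m)\geq\frac{1}{2(n+1)}$ is not established by this argument. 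To repair it you would need to handle the correlations between classes coming from the $\Q$-linear relations among the $\beta_i$, which is precisely where the difficulty of the problem lies.
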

\begin{proof}
\romannumeral1): For the sake of brevity let $\ol\eps=\ol\eps(n,m+1)$.
Let $\bsm{u}'_0\in\R^m$ and $\bsm{\al}'\in(\R\setminus\{0\})^m$ with $\dim_\Q(\bsm{\al}')\geq m-n$ be arbitrary.
Now, let $\bsm{u}_0=(\bsm{u}'_0,t)\in\R^{m+1}$, for some $t\in\R$, and let $\bsm{\al}=(\bsm{\al}',a)$, for some $a\in\R\setminus\{0\}$ that is rationally independent from the entries of $\bsm{\al}'$.
Note that $\dim_\Q(\bsm{\al})=\dim_\Q(\bsm{\al}')+1\geq m+1-n$ and $\bsm\al\in(\R\sm\set{0})^{m+1}$.

By definition of $\ol\eps$, we have $(\bsm{u}_0+\mc E_{\bsm\al})\cap[\ol\eps,1-\ol\eps]^{m+1}\neq\vn$.
The choice of~$a$ implies that no multiple $qa$, $q\in\Z\setminus\{0\}$, can be written as $\bsm{\ell}'\cdot\bsm{\al}'$, for some $\bsm{\ell}'\in\Z^m$.
Therefore, $\Lambda_{\bsm{\al}}=\Lambda_{\bsm{\al}'}\times\{0\}$, and hence
\begin{align*}
\pi(\mc E_{\bsm{\al}})&=\left\{\pi(\bsm{\xi}) \mid \bsm{\xi}\in\R^{m+1},\bsm{\ell}\cdot\bsm{\xi}\in\Z,\,\forall \bsm{\ell}\in\Lambda_{\bsm{\al}}\right\}\\
&=\left\{\bsm{\xi}'\in\R^m \mid \bsm{\ell}'\cdot\bsm{\xi}'\in\Z,\,\forall \bsm{\ell}'\in\Lambda_{\bsm{\al}'}\right\}=\mc E_{\bsm{\al}'},
\end{align*}
where $\pi:\R^{m+1}\to\R^m$ is the projection that forgets the last coordinate.
As a consequence we obtain that
\[(\bsm{u}'_0+\mc E_{\bsm{\al}'})\cap[\ol\eps,1-\ol\eps]^m=\pi(\bsm{u}_0+\mc E_{\bsm{\al}})\cap\pi([\ol\eps,1-\ol\eps]^{m+1})\neq\vn,\]
and hence $\ol\eps(n,m)\geq\ol\eps=\ol\eps(n,m+1)$ as desired.

\romannumeral2): In view of part~\romannumeral1), we have
\[\ol\eps(n,m)\leq\ol\eps(n,m-1)\leq\ldots\leq\ol\eps(n,n+1)\leq\eps(n,n+1)=\frac1{2(n+1)},\]
where the last equation is due to Theorem~\ref{Sch}.

For the lower bound, we consider the vector
\[\bsm\al=(\al_1,\ldots,\al_{m-n},\al_1,\ldots,\al_1)\in(\R\sm\set{0})^m,\]
where $\set{\al_1,\ldots,\al_{m-n}}$ is linearly independent over~$\Q$.
A basis of the lattice~$\Lambda_{\bsm\al}$ is given by $\{\bsm e_1-\bsm e_{m-n+1},\bsm e_1-\bsm e_{m-n+2},\ldots,\bsm e_1-\bsm e_m\}$, 
so we find that $Z_{\bsm\al}=[0,-1]^n+[\bsm0,\bsm 1_n]$.
This is exactly the same zonotope $Z_{\ol{\bsm\al}}$ that is induced by $\ol{\bsm\al}=\bsm 1_{n+1}\in\R^{n+1}$.
By the example following Theorem~\ref{Sch} we know that $\mu(Z_{\bsm\al})=\mu(Z_{\ol{\bsm\al}})=\frac{n}{n+1}$, and hence $\ol\eps(n,m)\geq\frac12(1-\mu(Z_{\bsm\al}))=\frac1{2(n+1)}$.
\end{proof}

Our intuition says that the more we restrict rational dependencies in the direction vector $\bsm\al$ the larger we can choose $\eps$ in Theorem~\ref{equivform}.
Proposition~\ref{propPropertiesBarEpsFunction} above shows that in order for this to be true, we need to impose stronger conditions than only $\dim_{\Q}(\bsm\al)\geq m-n$.
In fact, these remarks explain and justify the restriction to rationally uniform vectors in the definition of $\eps(n,m)$, and hence in the formulation of Conjecture~\ref{mainconj}.

In order to study this situation in more detail, we use the zonotopal definition of $\eps(n,m)$ provided by {\bf (V4)} together with Corollary~\ref{cor_ratunif_lgp}, that is,
\begin{align*}
\eps(n,m)=\sup\big\{\eps\geq0 \mid &\,\mu(Z,\Z^k)\leq1-2\eps\text{ for all lattice zonotopes }Z\subseteq\R^k\\
&\,\text{generated by }m\text{ vectors in \lgp\ and where }k\leq n\big\}.
\end{align*}


The following monotonicity properties of the function $\eps(n,m)$ are compatible with our conjecture that $\eps(n,m)=(m-n)/(2m)$ (see the remark after Conjecture~\ref{mainconj}).

\begin{prop}\label{propPropertiesEpsFunction}
Let $n,m\in\N$ be such that $n\leq m$. Then, we have
\begin{enumerate}[label=\roman*)]
 \item $\eps(n,m)\leq\eps(n-1,m)$ and $\eps(n,m)\leq\eps(n,m+1)$,
 \item $\eps(n,m)\leq\eps(n-1,m-1)\leq\ldots\leq\eps(1,m-n+1)=\frac{m-n}{2(m-n+1)}$.
\end{enumerate}
\end{prop}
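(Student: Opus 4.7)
The plan is to exploit the zonotopal characterization of $\eps(n,m)$ recorded just above the proposition: $1-2\eps(n,m)$ equals the supremum of $\mu(Z,\Z^k)$ taken over lattice zonotopes $Z\subseteq\R^k$ with $m$ generators in \lgp\ and $k\leq n$. Under this reformulation, the inequality $\eps(n,m)\leq\eps(n-1,m)$ of part~i) is immediate, since the admissible family for $\eps(n-1,m)$ is a subfamily of that for $\eps(n,m)$, so the corresponding supremum cannot be larger. The second inequality of part~i), $\eps(n,m)\leq\eps(n,m+1)$, I would handle by dropping a generator. Given an admissible $Z=\sum_{i=1}^{m+1}[\bsm 0,\bsm z_i]\subseteq\R^k$ for $\eps(n,m+1)$, set $Z'=\sum_{i=1}^m[\bsm 0,\bsm z_i]$; the remaining generators stay in \lgp, and in the main case $m\geq k$ they contain a basis of $\R^k$, making $Z'$ admissible for $\eps(n,m)$. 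Since $Z'\subseteq Z$ gives $\mu(Z)\leq\mu(Z')$, the inequality follows by taking the supremum over~$Z$. The residual range $m<k$ forces $m<n$, where the cube $[0,1]^m$ (a zonotope in $\R^m$ with $\mu=1$) is already admissible for~$\eps(n,m)$, so $\eps(n,m)=0$ and the inequality is trivial.

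The key new step in part~ii) is $\eps(n,m)\leq\eps(n-1,m-1)$; iterating this bound gives the chain, and the final value $\eps(1,m-n+1)=(m-n)/(2(m-n+1))$ has been computed in the text. I would argue using the direction-vector formulation. Fix $\eps<\eps(n,m)$, arbitrary $\bsm u_0'\in\R^{m-1}$, and a rationally uniform $\bsm\al'\in(\R\sm\set 0)^{m-1}$ with $d:=\dim_\Q(\bsm\al')\geq m-n$. The plan is to construct some $\al_m\in\R\sm\set 0$ so that $\bsm\al:=(\bsm\al',\al_m)\in\R^m$ is again rationally uniform with $\dim_\Q(\bsm\al)=d\geq m-n$; then $\bsm\al$ satisfies the hypothesis of $\eps(n,m)$ and so {\bf(V2)} holds for $\bsm u_0:=(\bsm u_0',0)$ and~$\bsm\al$ with parameter~$\eps$. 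Projecting the line $\set{\bsm u_0+t\bsm\al}/\Z^m$ and the cube $[\eps,1-\eps]^m$ onto the first $m-1$ coordinates produces $\set{\bsm u_0'+t\bsm\al'}/\Z^{m-1}$ and $[\eps,1-\eps]^{m-1}$ respectively, and preserves the intersection, yielding {\bf(V2)} in $\R^{m-1}$ and hence $\eps<\eps(n-1,m-1)$.

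The hard part will be the construction of $\al_m$. Rational uniformity of $\bsm\al$ requires every $d$-subset of its coordinates to be $\Q$-independent: subsets avoiding $\al_m$ are automatically handled by rational uniformity of $\bsm\al'$, but subsets containing $\al_m$ together with exactly $d-1$ coordinates of $\bsm\al'$ demand that $\al_m$ lie outside the $\Q$-span of those $d-1$ coordinates. I would therefore pick $\al_m$ inside the $d$-dimensional $\Q$-span of~$\bsm\al'$ (to guarantee $\dim_\Q(\bsm\al)=d$) but outside the finite union of the $\binom{m-1}{d-1}$ proper $\Q$-subspaces generated by $(d-1)$-subsets of coordinates of~$\bsm\al'$. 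Such a choice exists because a $\Q$-vector space is never exhausted by finitely many proper subspaces, and it automatically forces $\al_m\ne 0$, completing the reduction.
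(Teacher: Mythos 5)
Your part~i) follows the paper's argument almost verbatim: the first inequality from the nesting of the admissible zonotope families, the second by dropping a generator and using that $Z'\subseteq Z$ forces $\mu(Z,\Z^k)\leq\mu(Z',\Z^k)$. (Your ``residual range $m<k$'' never actually arises, since admissibility forces $k\leq n\leq m$, but this is harmless.) Part~ii) is where you take a genuinely different route. The paper performs the dimension reduction on the zonotope side: starting from a $k$-dimensional lattice zonotope $Z'$ with $m-1$ generators in \lgp, it lifts each $\bsm z'_i$ to $(\bsm z'_i,h_i)\in\Z^{k+1}$, appends the extra generator $\bsm e_{k+1}$, chooses the $h_i$ so that the lifted set remains in \lgp, and then uses that the covering radius can only drop under the coordinate projection $\pi:\R^{k+1}\to\R^k$. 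You instead carry out the reduction on the direction-vector side, via {\bf(V2)}: given a rationally uniform $\bsm\al'\in(\R\sm\set0)^{m-1}$ with $\dim_\Q(\bsm\al')=d\geq m-n$, you append a coordinate $\al_m$ chosen inside the $\Q$-span of $\bsm\al'$ but outside the $\binom{m-1}{d-1}$ proper $\Q$-subspaces spanned by $(d-1)$-subsets of its coordinates; this keeps $\dim_\Q$ fixed and rational uniformity intact, and then you project the torus line and the cube $[\eps,1-\eps]^m$ from $\TT^m$ to $\TT^{m-1}$. Both constructions rest on an avoidance lemma; yours (a $\Q$-vector space over an infinite field is not a finite union of proper subspaces) is spelled out completely, whereas the paper's existence of suitable $h_i$ is asserted and needs a small genericity argument (each offending determinant is a nonzero multilinear polynomial in the $h_i$). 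The paper's version stays in the language in which the final covering-radius statement is phrased and needs only the monotonicity of $\mu$ under projections; yours stays closer to the original view-obstruction setup and is arguably more self-contained in its justification. Both are correct, and both reduce to the already established identity $\eps(1,m-n+1)=\tfrac{m-n}{2(m-n+1)}$.
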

\begin{proof}
\romannumeral1): The inequality $\eps(n,m)\leq\eps(n-1,m)$ follows directly from the definition of $\eps(n,m)$.
For the second inequality, let $Z'=\sum_{i=1}^{m+1}[\bsm 0,\bsm z_i]\subseteq\R^k$, for some $k\leq n$, be generated by lattice vectors in \lgp.
Dropping the last generator gives us a zonotope $Z=\sum_{i=1}^m[\bsm0,\bsm z_i]$, which in view of $k\leq n<m+1$ is of the same dimension as $Z'$ and generated by vectors in \lgp.
Since, clearly $Z\subseteq Z'$, this implies $\mu(Z',\Z^k)\leq\mu(Z,\Z^k)\leq1-2\eps(n,m)$.

\romannumeral2): It suffices to show that $\eps(n,m)\leq\eps(n-1,m-1)$.
Let $Z'=\sum_{i=1}^{m-1}[\bsm 0,\bsm z'_i]$ be generated by $\{\bsm z'_1,\ldots,\bsm z'_{m-1}\}\subseteq\Z^k$ in \lgp, for some $k\leq n-1$.
We define $\bsm z_m=\bsm e_{k+1}$ and $\bsm z_i=(\bsm z'_i,h_i)$, for $i\in[m-1]$, where $h_i\in\Z$ are chosen such that $\{\bsm z_1,\ldots,\bsm z_m\}$ is in \lgp.
Now, the zonotope $Z=\sum_{i=1}^m[\bsm 0,\bsm z_i]\subseteq\R^{k+1}$ projects onto $Z'$, that is, $Z'=\pi(Z)$, and clearly $\Z^k=\pi(\Z^{k+1})$, where $\pi:\R^{k+1}\to\R^k$ forgets the last coordinate.
Therefore, we have $\mu(Z',\Z^k)\leq\mu(Z,\Z^{k+1})\leq1-2\eps(n,m)$, which implies the desired inequality.
\end{proof}


%

\section{Bounds on \texorpdfstring{$\eps(n,m)$}{\textepsilon(n,m)} via the Flatness Theorem}
\label{sect_flatness}

In this section, we prove Theorem~\ref{thmCoveringRadiusAsymptotics}.
Our arguments are based on the so-called \emph{Flatness Theorem}, which states that every convex body that does not contain lattice points in its interior is necessarily flat in a lattice direction.

\begin{thm}[Khinchin~\cite{khinchin1948a}]
  \label{thmFlatness}
Let $ n \in \N $ and let $ K \subseteq \R^n $ be a convex body with $ \inter(K+\bsm t) \cap \Z^n = \vn $, for some $\bsm t\in\R^n$.
There exists a vector $ \bsm{v} \in \Z^n\setminus\{\bsm 0\}$ and a constant $ \flt(n) $ only depending on $n$ such that
\[w(K, \bsm{v}) := \max_{\bsm{x} \in K} \bsm{x}\cdot \bsm{v}  - \min_{\bsm{x} \in K} \bsm{x}\cdot \bsm{v}\le \flt(n).\]
\end{thm}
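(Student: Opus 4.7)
The plan is to derive the Flatness Theorem from three classical ingredients of the geometry of numbers: symmetrization by the difference body, Minkowski's Second Theorem, and the transference inequality between a symmetric convex body and its polar.

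\emph{Step 1 (Symmetrization).} Replace $K$ by the $0$-symmetric difference body $L = \tfrac{1}{2}(K - K)$, which satisfies $w(L, \bsm v) = w(K, \bsm v)$ for every $\bsm v \in \R^n$. It therefore suffices to produce a nonzero $\bsm v \in \Z^n$ with $w(L, \bsm v) \le \flt(n)$. The hypothesis $\inter(K + \bsm t) \cap \Z^n = \vn$ must be converted into a lower bound on the covering radius of $L$. By the Minkowski--Radon inclusion $-(K - \bsm c) \subseteq n(K - \bsm c)$ for $\bsm c$ the centroid of $K$, one obtains $\bsm c + \tfrac{2}{n+1} L \subseteq K$. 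Consequently some translate of $\tfrac{2}{n+1} L$ lies inside $\inter(K + \bsm t)$ and inherits the lattice-freeness in its interior, yielding $\mu(L, \Z^n) \ge \tfrac{2}{n+1}$.

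\emph{Step 2 (Minkowski and transference).} Apply Minkowski's Second Theorem to $L$, $\lambda_1(L) \cdots \lambda_n(L) \cdot \vol(L) \le 2^n$, together with the classical sandwich $\tfrac{1}{2}\lambda_n(L) \le \mu(L) \le \tfrac{n}{2} \lambda_n(L)$, to translate the lower bound on $\mu(L)$ into a lower bound of the form $\lambda_n(L) \ge \tfrac{4}{n(n+1)}$. Next, invoke the transference inequality $\lambda_i(L) \cdot \lambda_{n+1-i}(L^\circ) \le c(n)$, applied with $i = n$, to deduce $\lambda_1(L^\circ) \le c(n)/\lambda_n(L) \le c'(n)$. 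By definition of the first successive minimum, there exists a nonzero $\bsm v \in \Z^n$ with $\bsm v \in \lambda_1(L^\circ) \cdot L^\circ$, meaning $\max_{\bsm x \in L}|\bsm v \cdot \bsm x| \le \lambda_1(L^\circ)$. Consequently,
\[w(K, \bsm v) = w(L, \bsm v) \le 2 \lambda_1(L^\circ) \le \flt(n),\]
with $\flt(n)$ depending only on $n$.

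\emph{Main obstacle.} The logical outline is routine; the real difficulty lies in producing a sharp quantitative value for $\flt(n)$. The naive combination of the bounds above gives only $\flt(n) = O(n!)$. Lenstra's approach, via Korkine--Zolotarev reduction applied to an ellipsoidal approximation of $L$ supplied by John's theorem, improves this to $\flt(n) = O(n^{3/2})$; the best-known bound $\flt(n) = O(n \log n)$, due to Banaszczyk and invoked implicitly in Theorem~\ref{thmCoveringRadiusAsymptotics}, requires the $\ell$-position ellipsoid from the local theory of normed spaces. For the statement of Theorem~\ref{thmFlatness} as phrased, any polynomial-in-$n$ bound suffices.
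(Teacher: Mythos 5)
The paper does not prove this statement at all: it is quoted as Khinchin's classical Flatness Theorem, with a pointer to Barvinok~\cite[Ch.~VII.8]{barvinok2002acourse} for a proof and for the history of the constant $\flt(n)$; so there is no in-paper argument to compare yours against. On its own terms, your outline is a correct and standard derivation of the qualitative statement. Step 1 is sound: $w(\tfrac12(K-K),\bsm v)=w(K,\bsm v)$, the centroid inclusion $-(K-\bsm c)\subseteq n(K-\bsm c)$ gives $\bsm c+\tfrac{2}{n+1}L\subseteq K$, and a lattice-point-free interior of a translate of $\tfrac{2}{n+1}L$ does force $\mu(L,\Z^n)\geq\tfrac{2}{n+1}$. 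Step 2 is also correct, since $w(L)=2\lambda_1(L^\circ,\Z^n)$ for the symmetric body $L$. Three remarks. First, Minkowski's Second Theorem is announced as an ingredient but is never actually used where you place it: the deduction $\lambda_n(L)\geq\tfrac{2}{n}\mu(L)$ needs only Jarn\'ik's inequality $\mu\leq\tfrac12\sum_i\lambda_i\leq\tfrac n2\lambda_n$; Minkowski's theorem enters, if at all, inside the proof of the transference bound you invoke. Second, the homogeneous transference inequality $\lambda_i(L)\lambda_{n+1-i}(L^\circ)\leq c(n)$ (Mahler, Banaszczyk) is itself a theorem of comparable depth, so your argument is a reduction to another classical black box rather than a self-contained proof; that is acceptable for establishing the existence of some $\flt(n)$, since Mahler's version follows from Minkowski's Second Theorem together with the elementary lower bound on the volume product $\vol(L)\vol(L^\circ)$. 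Third, routing through $\lambda_n$ costs a factor of $n$: what the paper actually uses later, via \eqref{eqnFlatnessReformulation} and \eqref{eqnFlatnessBound}, is Banaszczyk's inhomogeneous transference $\mu(L)\cdot\lambda_1(L^\circ)\leq c\,n\log n$ applied directly, whereas your chain $\mu\leq\tfrac n2\lambda_n$ followed by $\lambda_n\lambda_1(L^\circ)\leq c(n)$ can only yield a bound weaker by roughly a factor $n$; this does not affect the statement as phrased, and you flag the quantitative issue yourself, but the attributions in your closing remark are loose on exactly this point.
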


There has been quite a lot of research on estimating the dimensional constant $\flt(n)$.
We refer to the textbook by Barvinok~\cite[Ch.~VII.8]{barvinok2002acourse} for a proof of Khinchin's result, a discussion of the history of the problem, as well as for the current state of the art.
The best result to date for the class of $o$-symmetric convex bodies, that is, convex bodies $K\subseteq\R^n$ such that $K=-K$, is due to Banaszczyk~\cite{banaszczyk1996inequalities}, who proved that
\begin{align}
\text{If }K\subseteq\R^n\text{ is }o\text{-symmetric, then }\flt(n)\leq c\,n\log{n},\text{ for some }c>0.\label{eqnFlatnessBound}
\end{align}
It is generally believed that the optimal such bound is $\flt(n)\in O(n)$.

Recall that the covering radius $\mu(K)$ of $K$ is the minimal dilation $\mu>0$ such that every translate of $\mu K$ contains a point of $\Z^n$.
Therefore, the Flatness Theorem reformulates as
\begin{align}
\mu(K)w(K)\leq\flt(n),\label{eqnFlatnessReformulation}
\end{align}
where $w(K)=\min_{\bsm{v}\in\Z^n\setminus\{\bsm{0}\}}w(K,\bsm{v})$ denotes the \emph{lattice width} of $K$.
As a consequence, in order to obtain upper bounds on the covering radius $\mu(Z)$ of a lattice zonotope~$Z$, it suffices to study its lattice width.

\begin{lemma}\label{lemLatticeWidth}
Let $S=\{\bsm{z}_1,\ldots,\bsm{z}_m\}\subset\Z^n$, $m\geq n$, and let $Z=\sum_{i=1}^m[\bsm{0},\bsm{z}_i]$ be the lattice zonotope generated by~$S$.
If $S$ is in \lgp, we have \[w(Z)\geq m-n+1.\]
For every $m\geq n$ there exists such a set $S$ in \lgp\ attaining the bound.
\end{lemma}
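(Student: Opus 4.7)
The plan is to exploit the fact that the width of a zonotope in a given direction is additive over its segment summands: for any $\bsm v \in \R^n$,
\[w(Z,\bsm v) \;=\; \sum_{i=1}^m w([\bsm 0,\bsm z_i],\bsm v) \;=\; \sum_{i=1}^m |\bsm z_i\cdot\bsm v|.\]
Taking the lattice width amounts to minimizing this sum over $\bsm v \in \Z^n\setminus\{\bsm 0\}$. For any such $\bsm v$, each inner product $\bsm z_i\cdot\bsm v$ is an integer, so every nonzero term in the sum is at least one. Thus we are reduced to counting how many of the $\bsm z_i$ can be orthogonal to a fixed lattice direction~$\bsm v$.

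The \lgp\ hypothesis makes this easy: if $n$ of the $\bsm z_i$ were to lie in the hyperplane $\bsm v^\perp$, they would be linearly dependent, contradicting \lgp. Hence at most $n-1$ indices $i$ satisfy $\bsm z_i\cdot\bsm v=0$, which leaves at least $m-(n-1)=m-n+1$ terms in the sum $\sum_i|\bsm z_i\cdot\bsm v|$ with value $\geq 1$. Consequently $w(Z,\bsm v)\geq m-n+1$ for every $\bsm v\in\Z^n\setminus\{\bsm 0\}$, and the lower bound on $w(Z)$ follows immediately.

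For sharpness I would exhibit an explicit family. Take $\bsm z_i=\bsm e_i$ for $i\in[n-1]$ and, for $j\in\{n,\ldots,m\}$, set $\bsm z_j=\bsm e_n+\bsm y_j$, where $\bsm y_j\in\lin\{\bsm e_1,\dots,\bsm e_{n-1}\}\cap\Z^{n-1}$ are chosen so that every $k$-subset of $\{\bsm y_n,\dots,\bsm y_m\}$ differs pairwise by $k$ linearly independent vectors; concretely, points on a moment curve such as $\bsm y_j=(j,j^2,\ldots,j^{n-1})^\intercal$ work. With this choice, testing the direction $\bsm v=\bsm e_n$ yields $w(Z,\bsm e_n)=\sum_{j=n}^m 1=m-n+1$, matching the lower bound. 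It remains to verify \lgp\ for $S=\{\bsm z_1,\dots,\bsm z_m\}$: any subset of size $n$ decomposes as $p$ basis vectors from $\{\bsm e_1,\dots,\bsm e_{n-1}\}$ plus $q=n-p$ vectors $\bsm z_{j_1},\dots,\bsm z_{j_q}$ with $\bsm e_n$-coordinate equal to~$1$. Row-reducing on the last coordinate converts these into $\bsm z_{j_1}$ together with the differences $\bsm z_{j_s}-\bsm z_{j_1}=\bsm y_{j_s}-\bsm y_{j_1}$ ($s\geq 2$), which lie in the hyperplane $\{x_n=0\}$; their linear independence together with the chosen $p$ standard basis vectors reduces to a Vandermonde-style nonvanishing determinant, valid by the moment-curve choice.

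The only subtle step is the construction for tightness, where one has to juggle two competing constraints (small width versus \lgp); the width argument itself is essentially a one-line consequence of additivity plus pigeonholing on the hyperplane $\bsm v^\perp$. A moment-curve construction keeps the verification of \lgp\ conceptually clean via Vandermonde determinants, and I expect no other obstacles.
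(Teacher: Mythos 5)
Your proof is correct and follows essentially the same approach as the paper: both prove the lower bound by observing that \lgp\ forces at most $n-1$ generators into any lattice hyperplane $\bsm v^\perp$ (you phrase this cleanly via additivity of width over Minkowski summands, $w(Z,\bsm v)=\sum_i|\bsm z_i\cdot\bsm v|$, while the paper projects onto $L_{\bsm v}$ and counts lattice points, which amounts to the same computation), and both exhibit tightness with a moment-curve construction checked against a single coordinate direction (the paper uses $S=\{\bsm e_1,\ldots,\bsm e_n\}\cup\{(1,\ell,\ldots,\ell^{n-1})^\intercal\}$ and $\bsm v=\bsm e_1$, a cosmetic variant of yours).
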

\begin{proof}
Let $\bsm{v}\in\Z^n\setminus\{\bsm{0}\}$.
Since all vertices of $Z$ are lattice points, the width of $Z$ in direction $\bsm{v}$ is one less than the number of lattice planes parallel to $\bsm{v}^\perp=\{\bsm{x}\in\R^n \mid \bsm{v}\cdot\bsm{x}=0\}$ that intersect $Z$.
More precisely, writing $L_{\bsm{v}}=\lin\{\bsm{v}\}$, we have
\[w(Z,\bsm{v})=\abs{\left\{\bsm{y}\in\Z^n|L_{\bsm{v}} \ \big|\ (\bsm{v}^\perp+\bsm{y})\cap Z\neq\vn\right\}}-1,\]
where $\Z^n|L_{\bsm{v}}$ is the lattice that arises as the projection of $\Z^n$ onto $L_{\bsm{v}}$.
Hence, we need to estimate the number of points of $\Z^n|L_{\bsm{v}}$ in the (one-dimensional) projected zonotope $Z|L_{\bsm{v}}=\sum_{i=1}^m[0,\bsm{z}_i|L_{\bsm{v}}]$.

Since $S$ is in \lgp, the hyperplane $\bsm{v}^\perp$ contains at most $n-1$ generators of~$Z$.
Hence, there are at least $m-(n-1)$ nonzero generators of $Z|L_{\bsm{v}}$, which implies that the segment $Z|L_{\bsm{v}}$ contains at least $m-(n-1)+1$ points of $\Z^n|L_{\bsm{v}}$.
From the above considerations this yields $w(Z,\bsm{v})\geq m-n+1$, and as $\bsm v\in\Z^n\sm\set{\bsm 0}$ was arbitrary, we get $w(Z)\geq m-n+1$ as desired.

In order to show that the bound cannot be improved in general, we construct a set of generators in \lgp, that satisfies the above projection properties extremally for a particular lattice direction~$\bsm{v}$.
For $\bsm{v}=\bsm e_1$ such a set is given by
\begin{align}
S=\left\{\bsm e_1,\ldots,\bsm e_n\right\} \cup \left\{(1,\ell,\ell^2,\ldots,\ell^{n-1})^\intercal \mid \ell=1,\ldots,m-n\right\}.\label{eqnExtremalGeneratingSet}
\end{align}
The determinant formula for the Vandermonde matrix readily implies that~$S$ is indeed in \lgp.
\end{proof}

\begin{proof}[Proof of Theorem~\ref{thmCoveringRadiusAsymptotics}]
The upper bound follows by combining the Flatness Theorem in its formulation~\eqref{eqnFlatnessReformulation}, with Lemma~\ref{lemLatticeWidth} above and Banaszczyk's bound~\eqref{eqnFlatnessBound} on $\flt(n)$.
Note, that we can apply the latter since zonotopes are $o$-symmetric up to translation.

We have seen in Lemma~\ref{lemLatticeWidth} that the lattice zonotope $Z$ generated by the set $S$ in~\eqref{eqnExtremalGeneratingSet} has lattice width $w(Z)=m-n+1$.
Now, for any convex body $K\subseteq\R^n$, one has $\mu(K)w(K)\geq1$.
In fact, assuming that $K$ is scaled such that $w(K)=1$, we find a vector $\bsm v\in\Z^n\sm\set{\bsm 0}$ such that $w(K,\bsm v)=1$, which means that up to a translation $K$ is sandwiched between two parallel consecutive lattice planes orthogonal to~$\bsm v$.
Hence, there exists a translate of~$K$ whose interior does not contain lattice points, and thus $\mu(K)\geq1$.
Together with the previous observations, the covering radius of the lattice zonotope~$Z$ can now be bounded by $\mu(Z)\geq1/(m-n+1)$, as desired.
\end{proof}

\begin{rem}
The construction of lattice zonotopes $Z$ with $\mu(Z)\geq1/(m-n+1)$ in the above proof together with~\eqref{eqnEpsSupMu} shows the upper bound
\[\eps(n,m)=\frac12\left(1-\sup \mu(Z)\right)\leq\frac12\left(1-\frac1{m-n+1}\right)=\frac{m-n}{2(m-n+1)}.\]
This coincides with the bound derived by the monotonicity of the function $\eps(n,m)$ in Proposition~\ref{propPropertiesEpsFunction}.
\end{rem}

\section{A Reformulation of the Lonely Runner Conjecture}
\label{sect_LRC}

In this section, we consider the special case of billiard ball motions, or equivalently view-obstruction problems, where the starting point $\bsm u_0=\bsm0$.
This restricted variant was independently introduced by Wills~\cite{willslrc} as a problem in Diophantine approximation (see the description~\eqref{eqnDiophApprDescr} below) and by Cusick~\cite{cusickviewob} in the form of the view-obstruction formulation.
Goddyn came up with the nowadays very popular interpretation which he coined the \emph{Lonely Runner Conjecture}.
It states that if $m$ runners with nonzero constant velocities run on a circular track
of length $1$, with common starting point, then in a certain moment in time all runners are away from the starting point, having distance at least $1/(m+1)$.
This conjecture has been proven for all $m\leq6$ but is open for all other cases; see~\cite{barajasserra2008the} for the proof for $m=6$ and more background information on the 
problem\footnote{This case corresponds to seven runners, as the seventh runner is considered to have velocity equal to zero, being fixed at the starting point.}.

As a corollary to Theorem~\ref{equivform}, we may summarize the equivalent interpretations as follows.
We use the symbol $\bsm v$ instead of $\bsm\al$ in the sequel to stress that the problem only depends on the velocities of the runners.

\begin{cor}\label{corEquivLRP}
Let $\bsm v\in\R^m$, $0\leq\eps\leq 1/2$, and $n=\dim(V_{\bsm v})$.
The following statements are equivalent.
\begin{enumerate}[label={\bf (L\arabic*)},leftmargin=35pt]
 \item The $\bbm(\bsm 0,\bsm v)$ in $[0,1]^m$ intersects $[\eps,1-\eps]^m$.
 \item The line $\left\{t\bsm v \mid t\in\RR\right\}/\,\Z^m$ in $\TT^m$ intersects $[\eps,1-\eps]^m$.
 \item The view from $\bsm 0$ with direction $\bsm v$ is obstructed by $[\eps,1-\eps]^m+\ZZ^m$. 
 \item $\left((1-2\eps)Z_{\bsm v}+\bsm c\right)\cap\Z^n\neq\vn$, where $\bsm c=T(\eps\bs1_m | V_{\bsm v})$.
\end{enumerate}
\end{cor}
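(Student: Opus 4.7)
The plan is to derive this corollary as a direct specialization of Theorem~\ref{equivform}, so essentially no new work is required beyond substituting $\bsm u_0=\bsm 0$ and $\bsm\al=\bsm v$ and tracking how the four statements transform. First I would observe that the equivalence of the geometric conditions (L1), (L2), and (L3) is immediate upon specialization: the billiard ball motion $\bbm(\bsm 0,\bsm v)$ is a particular instance of $\bbm(\bsm u_0,\bsm\al)$, the line $\set{t\bsm v \mid t\in\R}/\Z^m$ in $\TT^m$ is the image of the line in (V2) when $\bsm u_0=\bsm 0$, and likewise (L3) is just (V3) at the origin. Since the equivalences (V1)$\Leftrightarrow$(V2)$\Leftrightarrow$(V3) were established in Section~\ref{sect_theorem_equivs} (via the reflection symmetry argument showing the billiard motion avoids $[\eps,1-\eps]^m$ iff the line in $\TT^m$ does), these carry over verbatim.

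The only step that requires a brief computation is the passage from (V4) to (L4). By definition, (V4) reads $\bigl((1-2\eps)Z_{\bsm v}-\bar{\bsm u}_0\bigr)\cap\Z^n\neq\vn$ with
\[\bar{\bsm u}_0=T\bigl((\bsm u_0-\eps\bs 1_m)\big|V_{\bsm v}\bigr).\]
Setting $\bsm u_0=\bsm 0$, linearity of $T$ and of the orthogonal projection onto $V_{\bsm v}$ gives
\[\bar{\bsm u}_0=T\bigl((-\eps\bs 1_m)\big|V_{\bsm v}\bigr)=-T\bigl(\eps\bs 1_m\big|V_{\bsm v}\bigr)=-\bsm c,\]
so that $(1-2\eps)Z_{\bsm v}-\bar{\bsm u}_0=(1-2\eps)Z_{\bsm v}+\bsm c$, which is precisely the set appearing in (L4). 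Hence (V4) specializes to (L4).

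Combining these two observations, the chain (L1)$\Leftrightarrow$(L2)$\Leftrightarrow$(L3)$\Leftrightarrow$(L4) follows immediately from the corresponding chain in Theorem~\ref{equivform}. There is no real obstacle here; the corollary is essentially a restatement, and the only thing that could go wrong is a sign error or a careless manipulation of the projection, which is why I would make the computation $\bar{\bsm u}_0=-\bsm c$ explicit. One might additionally remark that $\bsm c$ is a point of order $2$ in $\R^n/\Z^n$ in natural examples (since $T(\bs 1_m|V_{\bsm v})$ behaves like a half-period), foreshadowing the zonotopal reformulation of the lonely runner conjecture announced later in Conjecture~\ref{conjZonotopalLRC}.
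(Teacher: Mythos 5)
Your proposal is correct and matches the paper's treatment exactly: the paper states Corollary~\ref{corEquivLRP} without further proof as the specialization of Theorem~\ref{equivform} to $\bsm u_0=\bsm 0$, and the only nontrivial observation is the sign check $\bar{\bsm u}_0 = T\bigl((-\eps\bs 1_m)\big|V_{\bsm v}\bigr) = -\bsm c$, which you carry out correctly.
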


As before we want to determine the maximum $\eps\in[0,1/2]$ such that any of these equivalent statements hold for all velocity vectors $\bsm v$, under certain natural constraints.
Analogously to $\eps(n,m)$ and $\ol\eps(n,m)$, we therefore define
\begin{align*}
\eps_0(n,m)=\sup\{\eps\geq0 \mid &\,{\bf (Li)}\text{ holds for all rationally uniform }\bsm v\in(\R\setminus\{0\})^m\\
&\text{ with}\,\dim_\Q(\bsm v)\geq m-n\},
\end{align*}
and
\begin{align*}
\ol\eps_0(n,m)=\sup\{\eps\geq0 \mid &\,{\bf (Li)}\text{ holds }\forall\bsm v\in(\R\setminus\{0\})^m\text{ with}\,\dim_\Q(\bsm v)\geq m-n\}.
\end{align*}
The Lonely Runner Conjecture now claims that 
\[\eps_0(m-1,m)=\ol\eps_0(m-1,m)=1/(m+1),\]
and an extremal velocity vector would be given by $\bsm v=(1,2,\ldots,m)^\intercal$.
As we shall see in more detail below, in fact integral velocities alone determine $\eps_0(m-1,m)$.
On the other hand, due to a result by Czerwi{\'n}ski~\cite{czerwinski2012random}, we know that random velocity vectors allow for $\eps$ arbitrarily close to $1/2$ in Corollary~\ref{corEquivLRP}.
Since random vectors have linearly independent entries over~$\Q$, this is in the spirit of the trivial identity $\eps_0(0,m)=\ol\eps_0(0,m)=1/2$.
Using Theorem~\ref{thmCoveringRadiusAsymptotics} in the formulation~\eqref{eqnEpsAsympBounds} and similar ideas as in Propositions~\ref{propPropertiesBarEpsFunction}~\romannumeral1)
and \ref{propPropertiesEpsFunction}~\romannumeral1), we can interpolate between these two extremal situations.

\begin{cor}\label{corLRPBounds}
For any $n,m\in\N$ with $n\leq m$, we have
\begin{enumerate}[label=\roman*)]
 \item $\eps_0(m-1,m)\leq\ldots\leq\eps_0(1,m)\leq\eps_0(0,m)=\frac12$,
 \item $\eps_0(n,m)\geq\eps(n,m)\geq\frac{m-O(n\log{n})}{2(m-n+1)}$, and
 \item $\ol\eps_0(n,m)\leq\ol\eps_0(n,m+1)$.
\end{enumerate}
\end{cor}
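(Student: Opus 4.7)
The plan is to handle the three parts of the corollary in sequence; each is a short consequence either of the definitions or of results already established in the main body, and none requires new technical machinery.

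For part~\romannumeral1), the chain of inequalities expresses monotonicity of $\eps_0(\cdot,m)$ in the first argument. As $n$ decreases, the constraint $\dim_\Q(\bsm v)\geq m-n$ becomes stricter, so the family of admissible rationally uniform velocity vectors shrinks, which can only increase the supremum of admissible $\eps$. This yields $\eps_0(n,m)\leq\eps_0(n-1,m)\leq\cdots\leq\eps_0(0,m)$. The boundary value $\eps_0(0,m)=1/2$ comes from Kronecker's theorem: $\dim_\Q(\bsm v)\geq m$ forces the coordinates of $\bsm v$ to be linearly independent over $\Q$, so by the remark following Lemma~\ref{mainlemma} the line $\{t\bsm v\}/\Z^m$ is dense in $\TT^m$ and hence intersects $[\eps,1-\eps]^m$ for every $\eps<1/2$.

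For part~\romannumeral2), the inequality $\eps_0(n,m)\geq\eps(n,m)$ is immediate from the definitions: $\eps(n,m)$ requires \textbf{(Vi)} to hold for \emph{every} starting point $\bsm u_0\in\R^m$, while $\eps_0(n,m)$ only requires it for the single point $\bsm u_0=\bsm 0$, so any $\eps$ admissible for $\eps(n,m)$ is also admissible for $\eps_0(n,m)$. The second inequality is exactly the lower bound in~\eqref{eqnEpsAsympBounds}, which was derived from Theorem~\ref{thmCoveringRadiusAsymptotics} via the identity $\eps(n,m)=\tfrac12(1-\sup\mu(Z))$ recorded in~\eqref{eqnEpsSupMu}.

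For part~\romannumeral3), I would mirror the proof of Proposition~\ref{propPropertiesBarEpsFunction}~\romannumeral1), with the simplification that here the starting point $\bsm u_0=\bsm 0$ is automatically preserved under coordinate projection, so no auxiliary choice of an extra coordinate of $\bsm u_0$ is needed. Given an admissible $\bsm v'\in(\R\setminus\{0\})^m$ with $\dim_\Q(\bsm v')\geq m-n$, I would extend it to $\bsm v=(\bsm v',a)\in(\R\setminus\{0\})^{m+1}$ by appending a nonzero real~$a$ that is linearly independent from the entries of $\bsm v'$ over~$\Q$; this bumps the rational dimension by one, giving $\dim_\Q(\bsm v)\geq (m+1)-n$, so that $\bsm v$ is admissible for the $(n,m+1)$-problem. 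The projection $\pi:\R^{m+1}\to\R^m$ forgetting the last coordinate commutes with the $\Z$-quotient coordinatewise and sends $[\eps,1-\eps]^{m+1}$ onto $[\eps,1-\eps]^m$, so an intersection of $\{t\bsm v\}/\Z^{m+1}$ with the higher-dimensional cube projects, at the same value of $t$, to an intersection of $\{t\bsm v'\}/\Z^m$ with the lower-dimensional cube; combined with the fact that $\pi(\bsm 0)=\bsm 0$, this delivers the desired comparison. The main obstacle I anticipate is ensuring the admissibility conditions in the two dimensions line up correctly, where the genericity of the appended coordinate $a$ is essential.
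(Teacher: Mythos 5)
Your proofs of parts~\romannumeral1) and~\romannumeral2) are correct and follow the intended route: \romannumeral1) is direct set-monotonicity of the admissible velocity family plus the Kronecker density argument, and \romannumeral2) combines the trivial comparison $\eps_0\geq\eps$ with the lower bound in~\eqref{eqnEpsAsympBounds}.

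Part~\romannumeral3), however, has a directional error that makes the argument fail. Mirroring the proof of Proposition~\ref{propPropertiesBarEpsFunction}~\romannumeral1) as you describe --- extending an admissible $\bsm v'\in(\R\setminus\{0\})^m$ to $\bsm v=(\bsm v',a)\in(\R\setminus\{0\})^{m+1}$ by a $\Q$-independent $a$ and then projecting --- establishes the implication \textbf{(Li)} in dimension $m+1$ $\Rightarrow$ \textbf{(Li)} in dimension $m$. Quantifying over all admissible $\bsm v'$ this shows that every $\eps$ admissible for the $(n,m+1)$-problem is also admissible for the $(n,m)$-problem, i.e.\ $\ol\eps_0(n,m+1)\leq\ol\eps_0(n,m)$. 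That is precisely the \emph{reverse} of the inequality $\ol\eps_0(n,m)\leq\ol\eps_0(n,m+1)$ asserted in the corollary. Indeed, Proposition~\ref{propPropertiesBarEpsFunction}~\romannumeral1) itself yields $\ol\eps(n,m+1)\leq\ol\eps(n,m)$ (a \emph{decrease} in $m$), whereas \romannumeral3) asserts an \emph{increase} in $m$ for $\ol\eps_0$, so the same projection argument cannot be the right template. Your worry about ``ensuring the admissibility conditions line up'' is not the obstacle --- they do line up, as you verified --- the problem is the orientation of the deduced inequality.

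The route that matches the claimed direction is the one used for Proposition~\ref{propPropertiesEpsFunction}~\romannumeral1): given an admissible $\bsm v$ in dimension $m+1$, pass to the zonotope $Z_{\bsm v}$ (with $m+1$ generators and ambient dimension $\leq n$) and \emph{drop} a generator to reach the $(n,m)$-setting, exploiting the monotonicity of the body under inclusion. Here one must handle the facts that for $\ol\eps_0$ the generators need not be in \lgp\ and, more importantly, that condition \textbf{(L4)} tests containment of a lattice point in the \emph{specific} center-shifted body $(1-2\eps)Z_{\bsm v}+\bsm c$ rather than in all translates, so the shift $\bsm c$ (equivalently the center $\bsm x$) changes when a generator is removed. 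These details need to be worked out, but they are at least aimed in the correct direction; the extension/projection argument as you wrote it cannot yield~\romannumeral3).
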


These inequalities say that the more rational dependencies we allow in the velocity vector $\bsm v$ the smaller we have to choose $\eps$ in Corollary~\ref{corEquivLRP}.

In spite of these partial results, the main interest is of course to determine $\eps_0(m-1,m)$.
Wills~\cite{willslrc} first stated that the problem reduces to the case of nonzero velocities $v_1,v_2,\ldots,v_m\in\Z\sm\set{0}$ with $\gcd(v_1,\ldots,v_m)=1$, or equivalently, to vectors $\bsm v\in(\R\sm\set{0})^m$ with $\dim_\Q(\bsm v)=1$.
A proof of such a statement though, appeared only in Bohman, Holzman \& Kleitman~\cite[Lem.~8]{sixrunners}.
However, their lemma is not without restrictions; it states that if the conjecture is true for nonzero rational velocities in $m-1$ dimensions, then it is also true
for irrational velocities in $m$ dimensions, thus leaving only the rational case to prove in $m$ dimensions. 
We also remark that this statement is included in Corollary~\ref{corLRPBounds}~\romannumeral3) as the special case $n=m-1$.



Using the tools that we developed so far, we drop the dependence on lower dimensions, and show that the Lonely Runner Conjecture reduces to nonzero integer velocities in any
dimension unconditionally.

\begin{lemma}\label{LRCintegerReduction}
For any $m\in\N$, we have
\[\eps_0(m-1,m)=\sup\set{\eps>0 \mid \mc E_{\bsm\al}\cap[\eps,1-\eps]^m\neq\vn\text{ for all }\bsm\al\in(\ZZ\sm\set{0})^m}.\]
\end{lemma}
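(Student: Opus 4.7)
Denote the right-hand side of the claimed identity by $\eps^*$ for brevity. The plan is to prove the two inequalities $\eps_0(m-1,m)\leq\eps^*$ and $\eps^*\leq\eps_0(m-1,m)$ separately. The first is immediate: every $\bsm\al\in(\ZZ\sm\set{0})^m$ is automatically rationally uniform with $\dim_\QQ(\bsm\al)=1$, so such $\bsm\al$ lies in the class of vectors over which $\eps_0(m-1,m)$ is defined; any $\eps$ admissible for $\eps_0(m-1,m)$ is therefore admissible on the integer side as well.

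For the substantive inequality $\eps^*\leq\eps_0(m-1,m)$, the strategy is to reduce every rationally uniform $\bsm v\in(\R\sm\set{0})^m$ to an integer vector via the following embedding: find $\bsm\al\in(\ZZ\sm\set{0})^m$ with $\Lambda_{\bsm v}\subseteq\Lambda_{\bsm\al}$, which by the very definition of $\mc E_\cdot$ forces $\mc E_{\bsm\al}\subseteq\mc E_{\bsm v}$. Granted such $\bsm\al$, any $\eps<\eps^*$ yields $\mc E_{\bsm\al}\cap[\eps,1-\eps]^m\neq\vn$, and the inclusion then gives $\mc E_{\bsm v}\cap[\eps,1-\eps]^m\neq\vn$ for every admissible $\bsm v$, whence $\eps\leq\eps_0(m-1,m)$; letting $\eps\nearrow\eps^*$ concludes.

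To build the embedding, I would search for $\bsm\al$ inside the rational subspace $V_{\bsm v}^\perp\subseteq\R^m$: any $\bsm\al\in V_{\bsm v}^\perp\cap\ZZ^m$ satisfies $\bsm\al\cdot\bsm l=0$ for every $\bsm l\in\Lambda_{\bsm v}\subseteq V_{\bsm v}$, giving $\Lambda_{\bsm v}\subseteq\Lambda_{\bsm\al}$ for free. Since $V_{\bsm v}$ is spanned by the integer vectors in $\Lambda_{\bsm v}$, $V_{\bsm v}^\perp$ is a rational subspace and $V_{\bsm v}^\perp\cap\ZZ^m$ has full rank $k=\dim_\QQ(\bsm v)\geq 1$ inside it, so there is an abundant supply of candidate integer vectors.

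The main obstacle is arranging that $\bsm\al$ has no vanishing coordinate. The key observation is that $\bsm v$ itself lies in $V_{\bsm v}^\perp$ and has only nonzero coordinates, so no coordinate hyperplane $\set{x_i=0}$ can contain $V_{\bsm v}^\perp$; hence each intersection $V_{\bsm v}^\perp\cap\set{x_i=0}$ is a proper linear subspace of $V_{\bsm v}^\perp$. Since a positive-dimensional real vector space is not a finite union of proper subspaces, a generic point of $V_{\bsm v}^\perp$ avoids all coordinate hyperplanes; I would approximate such a point by a rational point in $V_{\bsm v}^\perp\cap\QQ^m$ (possible by density, the avoidance being an open condition) and clear denominators to obtain the desired $\bsm\al\in(\ZZ\sm\set{0})^m$. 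With this construction in hand, the remainder of the argument is a tautological comparison of the two defining sets.
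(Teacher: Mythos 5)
Your proof is correct, and your overall reduction coincides with the paper's: one inequality is trivial since integer directions are a special case, and for the other you locate an integer vector with no zero coordinate inside $V_{\bsm v}^\perp\cap\ZZ^m$, then pass to $\mc E_{\bsm\al}\ssq\mc E_{\bsm v}$.

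Where you diverge is in how that nonzero-coordinate integer vector is produced, and your route is arguably cleaner. The paper argues by contradiction inside the lattice $V_{\bsm v}^\perp\cap\ZZ^m$: assuming every element has a vanishing coordinate, it shows that any two lattice vectors without a common zero entry would combine into one with no zero at all, and from the resulting "common zero coordinate" it deduces a coordinate unit vector lies in $V_{\bsm v}$, forcing a coordinate of $\bsm v$ to vanish. (As written this passes from "every pair shares a zero" to "all share a zero," which really wants a minimal-zero-set iteration to be airtight; the combinatorial mechanism is there but slightly telegraphic.) You instead make the positive observation that $\bsm v\in V_{\bsm v}^\perp$ with all coordinates nonzero immediately shows each coordinate hyperplane cuts $V_{\bsm v}^\perp$ properly, so a generic point avoids all of them; rationality of $V_{\bsm v}^\perp$ and openness of the avoidance condition then let you pick a rational point and clear denominators. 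This sidesteps the pairwise-to-global step entirely and keeps the argument geometric. Both proofs, correctly, never need rational uniformity of $\bsm v$ at this stage.
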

\begin{proof}
Restricting the attention to vectors with integral coordinates clearly cannot decrease the considered supremum, showing that $\eps_0(m-1,m)$ is less than or equal to the right hand side of the claimed identity.

In order to show the reverse inequality, it suffices to show that for every $\bsm\al\in(\R\sm\set{0})^m$ there is a $\bsm\be\in(\ZZ\sm\set{0})^m$, such that $\mc E_{\bsm\be}\ssq\mc E_{\bsm\al}$. 
In order to see this, observe that by definition $\dim(V_{\bsm\al})=\dim(V_{\bsm\al}\cap\ZZ^m)=\dim(\La_{\bsm\al})$ and analogously $\dim(V_{\bsm\al}^{\perp}\cap\ZZ^m)=\dim(V_{\bsm\al}^{\perp})>0$.
So there is a nonzero $\bsm\be\in V_{\bsm\al}^{\perp}\cap\ZZ^m$. 

We claim that in fact there is such a $\bsm\be$ in $(\ZZ\sm\set{0})^m$.
Let us assume the contrary, that is, every $\bsm\be\in V_{\bsm\al}^{\perp}\cap\ZZ^m$ has at least one coordinate equal to zero.
Suppose for the moment that there exist nonzero $\bsm\be=(\be_1,\dotsc,\be_m)$ and $\bsm\be'=(\be'_1,\dotsc,\be'_m)$ in $V_{\bsm\al}^\perp\cap\Z^m$ that have no common zero coordinate, that is, for every $j\in[m]$, either $\be_j\neq0$ or $\be'_j\neq0$.
It is easy to see that this means that the vector
\[\bsm\be+(2\abs{\be_1}+\dotsb+2\abs{\be_m})\bsm\be'\in V_{\bsm\al}^{\perp}\cap\ZZ^m\]
has no coordinate equal to zero, contradicting the assumption.
Hence, all $\bsm\be\in V_{\bsm\al}^{\perp}\cap\ZZ^m$ have a common zero coordinate, implying that a coordinate vector belongs to $V_{\bsm\al}$, and thus $\bsm\al$ has a zero coordinate, a contradiction.
 
Thus, there is a $\bsm\be\in V_{\bsm\al}^{\perp}\cap(\ZZ\sm\set{0})^m$, and by definition we have $\La_{\bsm\al}\subseteq\La_{\bsm\be}$, implying that $\mc E_{\bsm\be}\ssq\mc E_{\bsm\al}$, as desired.
\end{proof}



So, without loss of generality let $\bsm v=(v_1,\ldots,v_m)\in\ZZ^m_{>0}$ (replacing $v_i$ by $-v_i$ does not change the conclusion of the conjecture).
In view of {\bf (L2)}, we have $\eps_0(m-1,m)=1/(m+1)$ if and only if the inequalities
\begin{align}
\frac{1}{m+1}&\leq \set{tv_i}\leq \frac{m}{m+1},\quad 1\leq i\leq m\label{eqnDiophApprDescr}
\end{align}
hold for some $t\in\R$, which is Wills' original formulation given in~\cite{willslrc}.
Note that these inequalities together with Corollary~\ref{corLRPBounds}~\romannumeral3) would imply the following more general statement.

\begin{conj}\label{strongLRC}
 For any $v_1,\dotsc,v_m\in\R_{>0}$ with $\dim_{\Q}(v_1,\dotsc,v_m)\geq d$, there is some $t\in \R$, such that
\[\frac{1}{m+2-d}\leq \set{tv_i}\leq \frac{m+1-d}{m+2-d},\quad 1\leq i\leq m.\]
\end{conj}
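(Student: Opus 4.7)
The plan is to derive Conjecture \ref{strongLRC} from the Lonely Runner Conjecture using the monotonicity in Corollary \ref{corLRPBounds}(iii) as essentially the only extra ingredient.

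First I would recast the conclusion of the conjecture in the language of $\ol\eps_0$. For $\bsm v\in\R_{>0}^m$, the existence of $t\in\R$ realising
\[
\frac{1}{m+2-d}\;\leq\;\set{tv_i}\;\leq\;\frac{m+1-d}{m+2-d},\qquad 1\leq i\leq m,
\]
is exactly property (L2) of Corollary \ref{corEquivLRP} at $\eps=\frac{1}{m+2-d}$. The sign restriction $v_i>0$ is harmless since $\set{tv_i}\in[\eps,1-\eps]$ is symmetric under $v_i\mapsto -v_i$, and the hypothesis $\dim_\Q(\bsm v)\geq d$ is precisely what is required for $\bsm v$ to be an admissible test vector for $\ol\eps_0(m-d,m)$. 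Thus the conjecture reduces to the single inequality
\[
\ol\eps_0(m-d,m)\;\geq\;\frac{1}{m+2-d}.
\]

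Second I would anchor the estimate at LRC in the reduced dimension $m':=m-d+1$. The Lonely Runner Conjecture yields $\eps_0(m'-1,m')=\tfrac{1}{m'+1}=\tfrac{1}{m+2-d}$. In this boundary regime (where $m'-n=1$), every $\bsm v\in(\R\setminus\{0\})^{m'}$ with $\dim_\Q(\bsm v)\geq1$ is automatically rationally uniform, since a single nonzero real number is by itself $\Q$-linearly independent; combining this with Lemma \ref{LRCintegerReduction}, whose argument reduces both $\eps_0(m'-1,m')$ and $\ol\eps_0(m'-1,m')$ to the same supremum over nonzero integer vectors, gives
\[
\ol\eps_0(m'-1,m')\;=\;\eps_0(m'-1,m')\;=\;\frac{1}{m+2-d}.
\]

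Third I would iterate Corollary \ref{corLRPBounds}(iii) exactly $d-1$ times, keeping $n=m-d$ fixed and raising the number of runners from $m-d+1$ up to $m$:
\[
\ol\eps_0(m-d,m-d+1)\;\leq\;\ol\eps_0(m-d,m-d+2)\;\leq\;\dotsb\;\leq\;\ol\eps_0(m-d,m).
\]
Concatenating with the previous display closes the chain, and the actual existence of $t$ for every fixed $\bsm v$ follows from closedness of $[\eps,1-\eps]^m$, which allows the supremum in $\ol\eps_0$ to be attained at $\eps=\tfrac{1}{m+2-d}$.

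The main obstacle is external rather than internal: the entire argument is conditional on the Lonely Runner Conjecture, which is only known for $m\leq 6$. Specialising to $d=1$ one recovers LRC itself, so Conjecture \ref{strongLRC} is in fact equivalent to LRC under this route; Corollary \ref{corLRPBounds}(iii) is unconditional and does all the work of interpolating the finer dependence on the rational dimension $d$ without introducing any new combinatorial or analytic difficulty beyond LRC.
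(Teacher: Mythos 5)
Your proposal takes essentially the same route as the paper, which simply remarks that the Lonely Runner Conjecture (LRC) in dimension $m-d+1$, together with the monotonicity $\ol\eps_0(n,m)\leq\ol\eps_0(n,m+1)$ of Corollary~\ref{corLRPBounds}(iii), yields Conjecture~\ref{strongLRC}. Your chain $\ol\eps_0(m-d,m)\geq\ol\eps_0(m-d,m-d+1)=\eps_0(m-d,m-d+1)=\tfrac{1}{m+2-d}$, iterating (iii) exactly $d-1$ times, is exactly the intended deduction, and your handling of the sign issue and of the attainment of the supremum are reasonable fillers for details the paper leaves tacit.

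One claim in the middle step is incorrect as stated, although not load-bearing. You assert that when $m'-n=1$, \emph{every} $\bsm v\in(\R\setminus\{0\})^{m'}$ with $\dim_\Q(\bsm v)\geq1$ is automatically rationally uniform. This is false: rational uniformity requires every $\dim_\Q(\bsm v)$ coordinates to be $\Q$-independent, so e.g.\ $\bsm v=(1,1,\sqrt2)$ has $\dim_\Q(\bsm v)=2\geq1$ but is not rationally uniform. The automatic case is only $\dim_\Q(\bsm v)=1$. Fortunately your very next clause---that the proof of Lemma~\ref{LRCintegerReduction} reduces both $\eps_0(m'-1,m')$ and $\ol\eps_0(m'-1,m')$ to the same supremum over nonzero integer vectors (which all have $\dim_\Q=1$ and are rationally uniform)---is the correct justification and suffices on its own, so the overall argument survives. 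You should simply drop the erroneous first clause.
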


%
%
However, we want to take a more detailed look at condition~{\bf (L4)}, that is, $\left((1-2\eps)Z_{\bsm v}+\bsm c\right)\cap\Z^n\neq\vn$, where $\bsm c=T(\eps\bs1_m | V_{\bsm v})$.
To this end, let $Z_{\bsm v}=\sum_{i=1}^m[\bsm 0,\bsm z_i]$, where $\bsm z_i=T(\bsm e_i|V_{\bsm v})\in\Z^{m-1}$ and where $T$ is the linear map from Subsection~\ref{subsect_zonotopes_vo}.
Writing $\bsm x=\frac12\sum_{i=1}^m\bsm z_i$ for the center of $Z_{\bsm v}$ and putting $\eps=1/(m+1)$, we get
\begin{align*}
(1-2\eps)Z_{\bsm v}+\bsm c&=\frac{m-1}{m+1}(Z_{\bsm v}-\bsm x)+\frac{m-1}{m+1}\bsm x+\frac1{m+1}\sum_{i=1}^mT(\bsm e_i|V_{\bsm v})\\
&=\frac{m-1}{m+1}(Z_{\bsm v}-\bsm x)+\bsm x.
\end{align*}
With the analysis above, putting $n=m-1$, we obtain the following reformulation of the lonely runner conjecture.

\begin{conj}[Reformulation of the Lonely Runner Conjecture]\label{conjZonotopalLRC}
Let~$Z$ be a zonotope generated by $n+1$ vectors of $\ZZ^n$ in LGP, and let $\bsm{x}$ be the center of $Z$.
Then,
\[\left(\bsm{x}+\frac{n}{n+2}(Z-\bsm{x})\right)\cap\ZZ^n\neq \vn.\]
\end{conj}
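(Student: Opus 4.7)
The plan is to show that Conjecture~\ref{conjZonotopalLRC} is a faithful reformulation of the Lonely Runner Conjecture in the form $\eps_0(n,n+1)=1/(n+2)$. Since the LRC is open for $m\geq 7$, I cannot hope to prove the statement unconditionally; the goal is the equivalence. Once that is in place, the derivation shown in the paragraph just before Conjecture~\ref{conjZonotopalLRC} already contains the key identity $(1-2\eps)Z_{\bsm v}+\bsm c=\bsm x+\tfrac{n}{n+2}(Z_{\bsm v}-\bsm x)$ that interpolates between the two formulations.

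For the direction that LRC implies the conjecture, I start with an arbitrary lattice zonotope $Z=\sum_{i=1}^{n+1}[\bsm 0,\bsm z_i]\subseteq\R^n$ with generators in LGP, and produce an integer velocity vector $\bsm v\in(\Z\sm\{0\})^{n+1}$ such that $Z_{\bsm v}$ is unimodularly equivalent to $Z$. Concretely, I assemble the generators into a matrix $\bsm A\in\Z^{n\times(n+1)}$; its rows span a rank-$n$ rational sublattice of $\Z^{n+1}$, and the orthogonal complement of its real span is a line that contains a primitive integer vector $\bsm v$, for which $\La_{\bsm v}$ coincides with this row lattice. Hence $Z_{\bsm v}$ is unimodularly equivalent to $Z$. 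By Proposition~\ref{propMinorsAndSubalphas} applied with $I^c=\{i\}$ a singleton, the LGP condition on the columns of~$\bsm A$ is equivalent to $v_i\neq 0$ for every $i$, so indeed $\bsm v\in(\Z\sm\{0\})^{n+1}$. Applying {\bf (L4)} of Corollary~\ref{corEquivLRP} at $\eps=1/(n+2)$ produces a lattice point in $(1-2\eps)Z_{\bsm v}+\bsm c$; the identity above then rewrites this set as $\bsm x+\tfrac{n}{n+2}(Z_{\bsm v}-\bsm x)$, and unimodular equivalence transports the lattice point back to $Z$.

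For the reverse direction, Lemma~\ref{LRCintegerReduction} reduces the LRC to vectors $\bsm v\in(\Z\sm\{0\})^{n+1}$. Any such $\bsm v$ satisfies $\dim_\Q(\bsm v)=1$, so the rational uniformity requirement becomes equivalent to simply asking that no coordinate vanishes, which is already imposed. By Corollary~\ref{cor_ratunif_lgp}~\romannumeral1), the zonotope $Z_{\bsm v}\subseteq\R^n$ is then generated by $n+1$ vectors in LGP. Feeding $Z_{\bsm v}$ into the conjecture produces a lattice point in $\bsm x+\tfrac{n}{n+2}(Z_{\bsm v}-\bsm x)$, and running the identity $(1-2\eps)Z_{\bsm v}+\bsm c=\bsm x+\tfrac{n}{n+2}(Z_{\bsm v}-\bsm x)$ backwards yields the content of~{\bf (L4)}.

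The main obstacle is largely bookkeeping rather than a single difficult step: everything hinges on exhibiting a clean correspondence, modulo unimodular equivalence, between primitive nonzero integer velocity vectors in $(\Z\sm\{0\})^{n+1}$ and lattice zonotopes in $\R^n$ generated by $n+1$ vectors in LGP, and on checking that the ``no zero coordinate'' condition on $\bsm v$ matches precisely the LGP condition on the generators of $Z_{\bsm v}$. The substantive difficulty---actually producing the lattice point for every such zonotope---is the full strength of the Lonely Runner Conjecture and lies beyond the reach of the tools developed in the paper.
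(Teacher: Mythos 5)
Your proposal follows the paper's route: reduce to integer velocities via Lemma~\ref{LRCintegerReduction}, use Corollary~\ref{corEquivLRP}~{\bf (L4)}, translate via Corollary~\ref{cor_ratunif_lgp}, and apply the identity $(1-2\eps)Z_{\bsm v}+\bsm c=\bsm x+\frac{n}{n+2}(Z_{\bsm v}-\bsm x)$ at $\eps=1/(n+2)$, arguing the equivalence in both directions. The direction ``Conjecture~\ref{conjZonotopalLRC} implies LRC'' is correct as you have it, and the use of Proposition~\ref{propMinorsAndSubalphas} to match \lgp\ with nonvanishing coordinates of $\bsm v$ is exactly right.

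There is, however, a gap in the direction ``LRC implies Conjecture~\ref{conjZonotopalLRC}'': you claim that the primitive vector $\bsm v$ orthogonal to the rows of $\bsm A$ satisfies that $\La_{\bsm v}$ \emph{coincides} with the row lattice of $\bsm A$, and deduce that $Z_{\bsm v}$ is unimodularly equivalent to $Z$. But $\La_{\bsm v}=V_{\bsm v}\cap\Z^{n+1}$ is by definition a saturated sublattice, whereas the row lattice of an arbitrary $\bsm A\in\Z^{n\times(n+1)}$ with columns in \lgp\ need not be saturated. For instance, with $n=1$ and $\bsm A=(2,4)$, the columns $\{2,4\}\subset\Z$ are in \lgp, $Z=[0,6]$, yet $\bsm v=(2,-1)$ gives $\La_{\bsm v}=\Z(1,2)\supsetneq\Z(2,4)$ and $Z_{\bsm v}=[0,3]$, so $Z_{\bsm v}$ is not unimodularly equivalent to $Z$. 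The argument is repairable: writing $\bsm A=\bsm U\bsm A'$ where the rows of $\bsm A'$ form a basis of $\La_{\bsm v}$, one has $\bsm U\in\Z^{n\times n}$ with $\det\bsm U\neq0$, $Z=\bsm U Z_{\bsm v}$, and $\bsm U$ maps the center of $Z_{\bsm v}$ to the center of $Z$ while sending $\Z^n$ into $\Z^n$; hence a lattice point in $\bsm x'+\frac{n}{n+2}(Z_{\bsm v}-\bsm x')$ pushes forward under $\bsm U$ to one in $\bsm x+\frac{n}{n+2}(Z-\bsm x)$. So the conclusion holds, but ``unimodularly equivalent'' must be weakened to an integer (possibly non-unimodular) linear image, and this extra reduction step should be spelled out. (The paper's own two-sentence derivation is terse on this point as well, but your write-up asserts the unimodularity explicitly, which is what makes it a flaw.)
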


In the case that $\bsm x\in\ZZ^n$, there would be nothing to prove, so we can assume otherwise.
Then, $\ZZ^n$ and $\bsm x$ generate a lattice $\Lambda=\Z^n\cup(\bsm x+\Z^n)$.
Shifting everything by~$\bsm x$, so that $Z$ is $o$-symmetric with respect to the origin, our desired (nonempty) intersection becomes
\[\frac{n}{n+2}Z\cap(\Lambda\sm\ZZ^n).\]
In other words, we wish to prove that if we dilate $Z$ by a factor of at most $n/(n+2)$, we get a nontrivial point of $\Lambda$ not contained in $\ZZ^n$.
This alludes to the notion of the \emph{first restricted successive minimum}, as defined in \cite{henkthiel}:
\[\la_1(Z,\Lambda\sm\ZZ^n)=\min\set{\la\geq0 : \la Z\cap (\Lambda\sm\ZZ^n)\neq\vn}.\]
Hence, yet another reformulation of Conjecture~\ref{conjZonotopalLRC} with respect to this definition is the following:
{\em Let $Z\ssq\R^n$ be a zonotope generated by $n+1$ vectors of $\ZZ^n$ in LGP, and which is translated as to be $o$-symmetric.
Let $\Lambda$ be a lattice such that $\Lambda\sm\ZZ^n$ is a translate of $\Z^n$.
Then \[\la_1(Z,\Lambda\sm\ZZ^n)\leq\frac{n}{n+2}.\]}
In view of Schoenberg's result on the general view-obstruction problem (see Theorem~\ref{Sch}), we know that an upper bound of $n/(n+1)$ holds true.
It is quite interesting that this has not been significantly improved; the interested reader may consult~\cite{perarnauserra2016correlation} for an informative discussion of this matter.
So, even if it is difficult to prove the Lonely Runner Conjecture in the geometric setting, it is reasonable to ask whether the intersection
\[\frac{n+2-c}{n+2}Z\cap (\Lambda\sm\ZZ^n)\]
is nonempty for some absolute constant $c>1$.
Any such result would be a significant advance to the problem.


\bibliographystyle{amsplain}
\bibliography{jointbib}

\end{document}